\DeclareMathAlphabet{\mathpzc}{OT1}{pzc}{m}{it}
\newcommand{\U}{\mathbf{U}}
\newcommand{\Oint}{\mathcal O_{\text{int}}}
\newcommand{\nc}{\newcommand}
\nc{\g}{{\mathfrak g}}
\nc{\ghat}{\widehat{\g}}
\nc{\mc}{\mathcal}
\nc{\ep}{\epsilon}
\nc{\la}{\lambda}
\nc{\Z}{{\mathbb Z}}
\nc{\C}{{\mathbb C}}
\nc{\on}{\operatorname}
\nc{\wt}{\widetilde}
\nc{\om}{\omega}
\nc{\ol}{\overline}
\newtheorem{theorem}{Theorem}[section]
\newtheorem{lemma}[theorem]{Lemma}
\newtheorem{prop}[theorem]{Proposition}
\newtheorem{cor}[theorem]{Corollary}
\theoremstyle{definition}
\newtheorem{definition}[theorem]{Definition}
\newtheorem{example}[theorem]{Example}
\theoremstyle{remark}
\newtheorem{remark}[theorem]{Remark}
\newcommand{\A}{\mathcal A}
\newcommand{\Ud}{\dot{\mathbf U}}
\newcommand{\Uda}{_{\mathcal A}\dot{\mathbf U}}
\newcommand{\fA}{_\mathcal A\mathbf f}
\newcommand{\Lieg}{\mathfrak g}
\numberwithin{equation}{section}
\begin{document}

\title{Langlands duality for representations and quantum groups at a root of unity.}

\author[Kevin Mcgerty]{Kevin Mcgerty$^1$}\thanks{$^1$Supported by a Royal Society University Research Fellowship.}

\address{Department of Mathematics, Imperial College London}

\date{February, 2009}

\begin{abstract}
We give a representation-theoretic interpretation of the Langlands character duality of \cite{FH}, and show that the ``Langlands branching multiplicites'' for symmetrizable Kac-Moody Lie algebras are equal to certain tensor product multiplicities. For finite type quantum groups, the connection with tensor products can be explained in terms of tilting modules.
\end{abstract}

\maketitle

\section{Introduction}

Let $\Lieg$ be a simple Lie algebra and $^L\Lieg$ its Langlands dual Lie algebra. In \cite{FH} a duality between the irreducible characters of $\Lieg$ and $^L\Lieg$ was established and a number of conjectures were made about its properties, both on the level of representations and, more combinatorially, at the level of crystals. In this paper we generalize the character duality to the category $\Oint$ of integrable representations in category $\mathcal O$ for a symmetrizable Kac-Moody Lie algebra, and establish, in this more general context, a number of the conjectures of \cite{FH}. With a mild restriction on the generalized Cartan matrix we also give a representation-theoretic interpretation of the character duality using Lusztig's modified quantum groups at a root of unity.

It will be convenient to use Lusztig's notion of a root datum and Cartan datum, the latter being essentially a symmetrizable generalized Cartan matrix with an integral choice of symmetrization. Indeed for any symmetrizable generalized Cartan matrix $C = (a_{ij})_{i,j \in I}$ on an indexing set $I$, we may choose a root datum $(X,Y,I)$ consisting of a weight lattice $X$, a coweight lattice $Y$, a perfect pairing $\langle\cdot,\cdot \rangle\colon Y \times X \to \mathbb Z$, along with a set of simple roots $\{\alpha_i\}_{i \in I}\subset X$ and a set of simple coroots $\{\check{\alpha}_i\}_{i \in I}\subset Y$, which satisfy $\langle \check{\alpha}_i, \alpha_j \rangle = a_{ij}$. We will assume that both the simple roots and simple coroots are linearly independent, so that we have the standard partial ordering on $X$, and a dominant cone 
\[
X^+ = \{\lambda \in X: \langle \check{\alpha}_i,\lambda \rangle  \geq 0, \forall i \in I \}.
\]

Let $(d_i)_{i \in I}$ be an integral vector such that $DC$ is symmetric, where $D$ is the diagonal matrix with $D_{ii} = d_i$. Set $d$ to be the least common multiple of the $d_i$, and let $l_i = d/d_i$. Let $^L\mathfrak g$ be the Langlands dual Kac-Mpody algebra with Cartan matrix $C^t$. Then we may embed the weight lattice of $^L\Lieg$ into $X$ in such a way that $\varpi_i^L \mapsto \varpi_i^* = l_i \varpi_i$ (where $\varpi_i, \varpi_i^L$ are the fundamental weights). Let $X^*$ denote the image of this map\footnote{For Kac-Moody algebras of finite type, this corresponds to the embedding used by Frenkel and Hernandez, who denote its image by $P'$.}.

Given a simple highest weight representation $\nabla(\lambda)$ of $\Lieg$, such that $\lambda \in X^*\cap X^+$, let $c^*(\nabla(\lambda))$ denote the direct sum of those weight spaces of $\nabla(\lambda)$ whose weights lie in $X^*$. In \cite{FH} it was shown that the character of $c^*(\nabla(\lambda))$ is the virtual character of a representation of $^L\Lieg$: more precisely it was shown that
\begin{equation}
\label{characterdualityformula}
\chi(c^*(\nabla(\lambda))) = \chi^L(\lambda) + \sum_{\mu \in X^*, \mu < \lambda} m_{\mu}^\lambda \chi^L(\mu), \quad (m_{\mu}^\lambda \in \mathbb Z),
\end{equation}
where $\chi^L(\mu)$ is the character of the simple highest module $\nabla^L(\mu)$ for $^L\Lieg$ of highest weight $\mu$ (in the notation of that paper, the left-hand side is written $\Pi(\chi(\lambda))$). Moreover, it was also shown there that the crystal graph for $\nabla(\lambda)$ naturally contains the crystal graph of $\nabla^L(\lambda)$ (this result is also implied by a previous result of Kashiwara \cite{K96} in a somewhat different context). Thus the character $\chi^L(\lambda)$ can be viewed as a ``subcharacter'' of the character of $\chi(\lambda) = \chi(\nabla(\lambda))$ (that is, the dimension of a weight space in $\nabla^L(\lambda)$ is at most the dimension of the corresponding weight space of $\nabla(\lambda)$). 

In \cite{FH} the authors proposed an interpretation of the character duality $c^*$ in terms of representations of a two-parameter deformation of $\Lieg$. While we cannot establish the duality in this context, we have obtained an interpretation in terms of the representation theory of (one-parameter) quantum groups at a root of unity. Since the strategy using two-parameter deformations also involved specializing one parameter to a root of unity, our approach can be viewed as evidence for the conjectures in \cite{FH} on two-parameter deformations. 

Let $\Ud_\ell$ be a modified quantum group at an $\ell$-th root of unity. Recall that Lusztig has defined for such algebras a quantum Frobenius map $Fr\colon \Ud_\ell \to \Ud^*_\ell$ whose target is a modified quantum group $\Ud^*_\ell$, where the parameters $v_i$ for $\Ud^*_\ell$ are $\pm 1$ (see Section \ref{background} for details). Under mild restrictions, (which are satisfied for example by all finite and affine types except the affine type $A^{(1)}_{2n}$, which is in any case simply-laced, and so not interesting from the point of view of our duality) if $\Ud_\ell$ is the quantum group attached to $(X,Y,I)$ at $\ell = r$, the algebra $\Ud^*_\ell$ is very close to the enveloping algebra of $^L\Lieg$.  In particular, there is a natural category of representations for $\Ud_\ell^*$ which can be identified with the category $\Oint$ of integrable representations in category $\mathcal O$ for $^L\Lieg$, and moreover the natural notion of characters for representations of each algebra coincide under this identification. 

In \cite{M}, it was observed that the map $Fr$ has a natural splitting $c \colon \Ud^*_\ell \to \Ud_\ell$ (this is readily deduced from the existence of a slightly weaker kind of splitting map which had already been constructed by Lusztig). Now $\Ud_\ell$ has a standard representation $\nabla(\lambda)$ with character $\chi(\lambda)$, and via the map $c$, the subspace $c^*(\nabla(\lambda))$ becomes a representation of $\Ud^*_\ell$ which when viewed as a $^L\mathfrak g$-module lies in $\mathcal O_\text{int}$. It is the easy to check that this gives a representation-theoretic lift of equation (\ref{characterdualityformula}), and hence positivity for the integers $m_\mu^\lambda$. Some examples of this positivity were already established in \cite{FH} where the case of $B_2$ is studied in detail.

Another natural question posed by \cite{FH} was to compute the ``Langlands branching multiplicities'' $m_\mu^{\lambda}$. Somewhat surprisingly, we are able to give an expression for the $m_\mu^\lambda$ in terms of tensor product multiplicities for $\Lieg$ in the case of an arbitrary symmetrizable Kac-Moody Lie algebra. Since tensor product  multiplicities can be calculated via various combinatorial techniques, such as Littelmann paths \cite{Li95}, Kashiwara's crystal graphs (see \textit{e.g.} \cite{K02}), and (in the finite-type case) Berenstein-Zelevinsky's polyhedral expressions \cite{BZ}, this gives a computable expression for the Langlands branching multiplicities. Moreover, since tensor product multiplicities are manifestly positive, we get a purely combinatorial proof of the positivity of branching multiplicities, and hence obtain a proof that $c^*(\nabla(\lambda))$ has the structure of a $^L\Lieg$-representation in the general case (without explicitly constructing that action).

Using the theory of tilting modules and good filtrations of modules for quantum groups at a root of unity, we can also give representation-theoretic meaning to the combinatorial calculation of branching multiplicities. Since the theory needed for this is only available in the finite-type case, this interpretation is limited to that context. It is perhaps interesting to note that our results give a new application of the theory of quantum groups to a question which involves only the classical representation theory of Kac-Moody algebras.

We now briefly outline the organization of the paper: In Section \ref{background} we review Lusztig's modified quantum groups. In Section \ref{contract} we recall the contraction map of \cite{M}, and in Section \ref{duality} use it to construct a Langlands duality for representations in category $\Oint$ for any symmetrizable Kac-Moody algebra satisfying a mild technical condition. In Section \ref{combinatorialbranching} we establish the interpretation of Langlands duality branching multiplicities as tensor product multiplicities in the general case. In Section \ref{branchingtilting} we interpret this combinatorics via tilting modules and good filtrations for quantum groups of finite type.

\section{Quantum Groups and Modified Quantum Groups}
\label{background}
In this section we review Lusztig's modified quantum groups. A detailed account of these algebras is contained in \cite[Part IV]{L93}. Let $\mathbb Q(v)$ be the field of rational functions in an indeterminate $v$ with $\mathbb Q$-coefficients, and let $\mathcal A = \mathbb Z[v,v^{-1}]$, a subring of $\mathbb Q(v)$. Suppose that $C=(a_{ij})_{i,j\in I}$ is a symmetrizable generalized Cartan matrix, and let $(d_i)_{i \in I} \in \mathbb N^I$ be a vector of positive integers such that
\begin{equation}
\label{symmetrization}
d_ia_{ij} = d_ja_{ji}, \quad\forall i,j \in I.
\end{equation}
We may define a symmetric bilinear pairing $x,y \mapsto x \cdot y$ on $\mathbb Z[I]$ by setting
\[
i \cdot j = d_i a_{ij}, \quad i,j \in I.
\]
and extending linearly. The pair $(\mathbb Z[I], \cdot)$ is then a Cartan datum in the sense of Lusztig \cite{L93}
\begin{remark}
When the matrix $C$ is indecomposable (in the sense of Kac) all symmetrizing vectors $(d_i)_{i \in I}$ are multiples of the minimal such vector $(r_i)_{i \in I}$. Set $r = \text{l.c.m.}\{r_i: i \in I\}$ to be the least common multiple of the $r_i$, and similarly $d = \text{l.c.m.}\{d_i: i \in I\}$. All our results, however, make sense in the context of a general Cartan datum.
\end{remark}

The Weyl group attached to a generalized Cartan matrix is the group $W$ generated by involutions $\{s_i: i \in I\}$ satisfying braid relations of length $m_{ij} = 2,3,4,6$ according as $a
_{ij}a_{ji}$ is $0,1,2,3$ (when $a_{ij}a_{ji}$ is at least $4$ no relation is imposed).

For each $i \in I$ let $v_i = v^{d_i}$ and set 
\[
[n]_i = (v_i^n - v_i^{-n})/(v_i-v_i^{-1}) \in \mathcal A.
\] 
We then define 
\[
[n]_i! = [n]_i[n-1]_i\ldots[1]_i; \qquad {n \brack k}_i = \frac{[n]_i!}{[k]_i! [n-k]_i!}.
\]
It is easy to check that these quantum binomial coefficients all lie in $\mathcal A$. 

Given a Cartan datum, we define $\mathbf f$ to be the $\mathbb Q(v)$-algebra generated by symbols $\{\theta_i: i \in I\}$ subject to relations:
\[
\sum_{r+s=1-a_{ij}} \theta_i^{(r)}\theta_j\theta_i^{(s)} = 0,
\]
where $\theta_i^{(n)} = \theta_i^n/[n]_{i}!$ is a quantum divided power. We also need an integral form of $\mathbf f$: let $\fA$ be the $\mathcal A$-subalgebra of $\mathbf f$ generated by $\{\theta_i^{(n)}: n \geq 0, i \in I\}$. Lusztig \cite{L93} has shown that $\fA$ is a free $\mathcal A$-module with a canonical basis $\mathbf B$. Both $\mathbf f$ and $\fA$ are clearly $\mathbb Z[I]$-graded. 

To define the full quantum group we need slightly more data. Fix a Cartan datum $(I, \cdot)$. 
\begin{definition}
\label{rootdatumdef}
Let $C=(a_{ij})_{i,j \in I}$ be a generalized Cartan matrix. A root datum associated to $C$ consists of a pair $(X,Y)$ of finitely generated free abelian groups, a perfect pairing $\langle \cdot, \cdot \rangle \colon Y \times X \to \mathbb Z$, and finite sets $\{\alpha_i\}_{i \in I} \subset X$ and $\{\check{\alpha}_i\}_{i \in I} \subset Y$ consisting of simple roots and coroots respectively. These must satisfy 
\[
\langle \check{\alpha}_i, \alpha_j \rangle = a_{ij}, \quad \forall i, j \in I.
\]
Where there is no possibility for confusion, we normally abuse notation slightly and write $(X,Y,I)$ to denote a root datum. The Weyl group $W$ attached to $C$ acts on $X$ via  
\[
s_i \mapsto (\lambda \mapsto \lambda - \langle \check{\alpha}_i,\lambda\rangle \alpha_i) \in \text{Aut}(X).
\]
and by duality also on $Y$ so that $\langle w(\mu),\lambda\rangle = \langle \mu, w^{-1}(\lambda)\rangle$, for all $w \in W, \mu \in Y, \lambda \in X$. If $(I,\cdot)$ is a Cartan datum, we say that $(X,Y,I)$ is a root datum of type\footnote{In \cite{L93} a root datum is defined in terms of a Cartan datum, and the generalized Cartan matrix is not emphasized.} $(I,\cdot)$ if $(X,Y,I)$ and $(I,\cdot)$ are associated to the same generalized Cartan matrix $C$.
\end{definition}

The simple roots and simple coroots give natural maps from  $\mathbb Z[I]$ to $X$ and $Y$ respectively (which we suppress any notation for).  We say that a root datum is $X$-regular if the simple roots are linearly independent, and $Y$-regular if the simple coroots are. For a finite-type generalized Cartan matrix, $X$ and $Y$ regularity are automatic by nondegeneracy, but in general it is an additional assumption. For a $Y$-regular root datum we set
\[
X^+ = \{\lambda \in X: \langle \check{\alpha}_i, \lambda \rangle \geq 0\}.
\]
For an $X$-regular root datum, we may define a partial ordering on the weight lattice $X$ by setting $\lambda \leq \mu$ if $\mu- \lambda \in \sum_{i \in I} \mathbb N \alpha_i$. Unless otherwise stated, we always assume that our datum is $X$ and $Y$ regular, so that $X$ has both the dominant cone $X^+$ and the partial order.

\begin{definition}
Given a root datum $(X,Y)$ of type $(I, \cdot)$ we may define an associated quantum group $\mathbf U$, which is a $\mathbb Q(v)$-algebra generated by symbols $E_i, F_i, K_\mu$, $i \in I$, $\mu \in Y$, subject to the following relations.
\begin{enumerate}
\item $K_0=1$, $K_{\mu_1}K_{\mu_2} = K_{\mu_1+\mu_2}$ for $\mu_1,\mu_2 \in Y$;
\item $K_{\mu} E_i K_{\mu}^{-1} = v^{\langle\mu,\alpha_i\rangle}E_i, \quad K_{\mu} F_i K_{\mu}^{-1} =
 v^{-\langle\mu,\alpha_i \rangle}F_i$ for all $i \in I$, $\mu \in Y$;
\item $E_iF_j - F_jE_i = \delta_{i,j}\frac{\tilde{K}_i-\tilde{K}_i^{-1}}{v_i-v_i^{-1}}$;
\item The maps $+ \colon \{\theta_i: i \in I\} \to \U$ given by $\theta_i \mapsto E_i$ and $-\colon \{\theta_i \in I\} \to \mathbf U$ given by $\theta_i \mapsto F_i$ extend to homomorphisms $\pm\colon \mathbf f \to \U$.
\end{enumerate}
Here $\tilde{K}_i$ denotes $K_{(i\cdot i/2)\check{\alpha}_i}$. The images of $\mathbf f$ under the maps $\pm$ are denoted $\mathbf U^\pm$. 

\end{definition}

\begin{remark}
Suppose the generalized Cartan matrix is indecomposable, and $(r_i)_{i \in I}$ is the minimal symmetrizing vector. If $(d_i) = d(r_i)$ is another choice of symmetrizing vector, then the quantum group $\mathbf U$ obtained from the datum with $i\cdot j = d_ic_{ij}$ is obtained from the one corresponding to the minimal symmetrization by adjoining a $d$-th root of $v$. Frenkel and Hernandez study two-parameter quantum groups of, for example, types ``$B_1$'' and ``$C_1$'' where the two parameter seem to correspond to different choices of symmetrizations, but the author does not know if this is a useful perspective on their deformations.
\end{remark}

We now recall various categories of modules for $\mathbf U$. Given a left $\mathbf U$-module $V$, and $\lambda \in X$, the $\lambda$-weight space of $V$ is
\[
V_\lambda = \{u \in V: K_\mu.u = v^{\langle \mu, \lambda \rangle}.u, \quad \forall \mu \in Y\}.
\]
A module $V$ is said to be a \textit{weight module} if it is the direct sum of its weight spaces. The full subcategory of the category of $\mathbf U$-modules whose objects are weight modules will be denoted $\text{Mod}_X$, and the full subcategory whose objects are weight modules with finite dimensional weight spaces will be denoted $\text{Mod}_X^f$. In fact we will focus on a smaller full subcategory in $\text{Mod}_X$ consisting of integrable modules with certain bounds on weight spaces. A module $V$ in $\text{Mod}_X^f$ is \textit{integrable} if the actions of $E_i^{(n)}$ and $F_i^{(n)}$ are locally nilpotent for every $i \in I$, $n \in \mathbb N$. For $\lambda \in X$ let $D(\mu) = \{\lambda \in X: \lambda \leq \mu\}$. The module $V$ lies in $\mathcal O_{\text{int}}$ if it is integrable, and there is a finite set of weights $E \subset X$ such that if $V_\lambda \neq \{0\}$ then there some $\mu \in E$ with $\lambda \leq \mu$.

For modules in $\text{Mod}_X^f$ it is possible to define a notion of character. Let $\mathcal E'$ be the abelian group of formal sums $\sum_{\lambda \in X} a_\lambda e^{\lambda}$, where $a_\lambda \in \mathbb Z$. For $\xi \in \mathcal E'$ let $\text{supp}(\xi) = \{ \lambda \in X: a_\lambda  \neq 0\}$, and let $\mathcal E$ be the subgroup of $\mathcal E'$ consisting of those $\xi$ for which $\text{supp}(\xi)$ lies in a finite union of $D(\mu)$s. Given a module $V$ in $\text{Mod}_X^f$, we set
\[
\chi(V) = \sum_{\lambda \in X} \dim(V_\lambda) e^{\lambda} \in \mathcal E'.
\]
Clearly if $V$ lies in $\mathcal O_{\text{int}}$ then $\chi(V) \in \mathcal E$, and this yields an embedding of $K_0(\Oint)$ into $\mathcal E$. It can be shown that the character of a integrable module is invariant under the action of $W$, so the image of $\chi$ in fact lies in $\mathcal E^W$. Notice that although multiplication for elements of $\mathcal E'$ does not necessarily make sense, it is easy to see elements of $\mathcal E$ can be multiplied in the obvious way, so that $\mathcal E$ forms an algebra. 

We now recall the modified version of the quantum group due to Lusztig which is better suited to our purposes. Let $\widehat{\mathbf U}$ be the endomorphism ring of the forgetful functor from $\text{Mod}_X$ to the category of vector spaces. Thus by definition an element of $a$ of $\widehat{\mathbf U}$ associates to each object $V$ of $\text{Mod}_X$ a linear map $a_V$, such that $a_W\circ f = f \circ a_V$ for any morphism $f\colon V \to W$. Any element of $\mathbf U$ clearly determines an element of $\widehat{\mathbf U}$, giving a natural inclusion $\U \hookrightarrow \widehat{\U}$, so one may think of $\widehat{\mathbf U}$ as a sort of completion of $\mathbf U$. For each $\lambda \in X$, let $1_\lambda \in \widehat{\U}$ be the projection to the $\lambda$ weight space. Then $\widehat{\U}$ is isomorphic to the direct product $\prod_{\lambda \in X}\mathbf U 1_\lambda$, and we set $\Ud$ to be the subring (in fact, clearly, $\mathbb Q(v)$-subalgebra)
$$\dot{\mathbf U} = \bigoplus_{\lambda \in X} \mathbf U 1_\lambda.$$ Note that $\Ud$ does not have a multiplicative identity, but instead a collection $\{1_\lambda: \lambda \in X\}$ of orthogonal idempotents. 

It is clear that the category $\text{Mod}_X$ is equivalent to a category of modules for $\Ud$, the category of \textit{unital} modules. A module $V$ for $\Ud$ is said to be unital if for every $v \in V$, there is a finite set $K \subset X$ such that $v = \sum_{\lambda \in K} 1_\lambda(v)$. Let $\text{Mod}_{1}$ be the  category of unital modules for $\Ud$. It is easy to see that $\text{Mod}_X$ is equivalent to $\text{Mod}_1$ (see \cite[23.1.4]{L93}). We denote the full subcategories of $\text{Mod}_1$ corresponding to $\text{Mod}_X^f$, $\mathcal O_{\text{int}}$ by $\text{Mod}_1^f$ and $\dot{\mathcal O}_{\text{int}}$ respectively. The weight spaces of a module in $\text{Mod}_X$ correspond to the images of the operators $1_\lambda$ under this equivalence, hence viewing a module $V$ in $\text{Mod}_X$ as a module for $\Ud$ we may define the character by
\[
\chi(V) = \sum_{\lambda \in X} \dim(\text{im}(1_\lambda)) e^\lambda,
\] 

\begin{definition}
\label{present}
Let $\Uda$ be the $\mathcal A$-subalgebra of $\Ud$ generated by elements $E_i^{(n)}1_\lambda$ and $F_i^{(n)}1_\lambda$ for $\lambda \in X, i \in I$,  and $n \in \mathbb Z_{\geq 0}$. Lusztig \cite[Chapter 25]{L93} has shown that $_\mathcal A \Ud$ is a free $\mathcal A$-module equipped with a canonical basis $\dot{\mathbf B}$. 
\end{definition}

\begin{remark}
\label{Udotinfo}
The canonical basis $\dot{\mathbf B}$ of $\Ud$ was constructed by Lusztig. When the root datum is $X$-regular, in the same way that the canonical basis $\mathbf B$ of $\mathbf f$ yields natural bases for irreducible integrable highest weight modules, $\dot{\mathbf B}$ yields natural bases  for the tensor product of an irreducible integrable highest and lowest weight modules. The basis however can be constructed for an arbitrary root datum.
\end{remark}

\begin{remark}
It is straight-forward to give a presentation for $_\mathcal A\Ud$ via the $\mathbf U^{\pm}$-bimodule structure, so it can be described just as explicitly as $\mathbf U$. Here we have defined it in terms of the category $\text{Mod}_X$ in order to describe the relation between the representations of the two algebras.
\end{remark}

\begin{definition}
Let $(X,Y,I)$ and $(X^\sharp, Y^\sharp, J)$ be root data. Following Steinberg\footnote{And possibly many other. In \cite{L93} Lusztig defines morphisms of root data, but these are stricter than the notion of an isogeny.}, \cite{St} an \textit{isogeny} of root data is a map $\varphi\colon X \to X^\sharp$ and a bijection $ i \leftrightarrow i'$ between $I$ and $J$ such that 
\begin{itemize}
\item 
Both $\varphi$ and its transpose $\varphi^t \colon Y^\sharp \to Y$ are injective.
\item
$\varphi(\alpha_i) = \ell_i \alpha_{i'}$ for some $\ell_i \in \mathbb Z$, and similarly $\varphi^t(\check{\alpha}_{i'}) = \ell_i \check{\alpha}_{i}$.
\end{itemize}
\end{definition}

Given a positive integer $\ell$ and a root datum $(X,Y,I)$, Lusztig has defined an $\ell$-modified root datum\footnote{This is the author's term, but there does not seem to be any established terminology for it.} \cite[2.2.4]{L93}:

\begin{definition} Let $(I,\cdot)$ be a Cartan datum, and let $\ell$ be a positive integer. The $\ell$-modified Cartan datum $(I,\circ)$ is given by
\[
i \circ j = l_il_j(i\cdot j),
\]
where $l_i$ is the smallest positive integer such that $l_i(i\cdot i/2) \in \ell \mathbb Z$. It is easy to check that $(I, \circ)$ is indeed a Cartan datum. Given a root datum $(X,Y,I)$ of type $(I,\cdot)$ we can also define a new root datum of type $(I,\circ)$ by setting $X^*= \{\lambda \in X\colon \langle \check{\alpha}_i, \lambda \rangle \in l_i \mathbb Z\}$ and $Y^* = \text{Hom}(X^*, \mathbb Z)$, with the obvious pairing between $X^*$ and $Y^*$. The simple roots of the $(X^*,Y^*)$ are $\alpha_i^* = l_i \alpha_i$ and the simple coroots are $\check{\alpha}_i^*$, where $\check{\alpha}_i^*(\lambda) = l_i^{-1}\langle \check{\alpha}_i, \lambda \rangle$. Note that the inclusion $X^* \to X$ and its transpose, the induced restriction map $Y \to Y^*$ yield an isogeny from $(X,Y,I)$ to $(X^*,Y^*,I)$. It is easy to see that if the simple roots and coroots are linearly independent in $(X,Y,I)$ then the same is true for $(X^*,Y^*,I)$. 
\end{definition}

\begin{remark}
\label{Weylremark}
Note that it is immediate from the definitions that the Weyl group $W^*$ of $(I,\circ)$ is canonically isomorphic to $W$ the Weyl group of $(I,\cdot)$. Moreover, by checking on the generators $s_i$, it is easy to see that the action of $W^*$ on $X^*$ coincides with the restriction of the action of $W$ on $X$ via this isomorphism. We may thus identify the two groups and write $W$ for the Weyl group in either case\footnote{In \cite{FH} the fact that the Weyl group actions coincide is Lemma $2.2$. The root datum formalism however makes this check self-evident.}. In particular note that this shows $X^*$ is invariant under the action of $W$.
\end{remark}

\begin{remark}
\label{Cartanremark}
If $C$ is the generalized Cartan matrix of $(X,Y,I)$ then the generalized Cartan matrix of the $\ell$-modified datum is $LC$ where $L = (\ell_i^{-1}\ell_j)$. In the case where $d$ divides $\ell$, so that $\ell_i = \ell/d_i$ it follows that $LC = C^t$, that is, the $\ell$-modified root datum is attached to the transpose of $C$, and hence the quantum group associated to it is Langlands dual to that attached to the original datum. Notice also that, given $d$ divides $\ell$, the lattice $X^* \subset X$ depends only on the ratio $\ell/d$. More precisely, if the symmetrization vector $(d_i)_{i \in I}$ determining the Cartan datum is a multiple of another symmetrization vector $(c_i)_{i \in I}$, say $d_i = mc_i$, and we write $X^*_{\ell,d}$ for the sublattice given by $\ell$ and $(d_i)_{i \in I}$ then $X^*_{\ell,d} = (\ell/d) X^*_{c,c}$ where $c$ is the least common multiple of the $c_i$s.
\end{remark}

Let $Q = \bigoplus_{i \in I} \mathbb Z \alpha_i$ be the root lattice of $(X,Y,I)$, and $Q^*$ the root lattice of $(X^*, Y^*,I)$.  Let $\Phi$ denote the roots of $(X,Y,I)$ and $\Phi^*$ denote the roots of $(X^*,Y^*,I)$.

\begin{lemma}
\label{scaling}
Suppose that $(X,Y,I)$ is of finite or affine type. Then the inclusion $\iota \colon X^* \to X$ induces a bijection $\alpha \leftrightarrow \alpha^*$ between $\Phi$ and $\Phi^*$ such that $\alpha^* = l_\alpha \alpha$, for some $l_\alpha \in \mathbb Z$.
\end{lemma}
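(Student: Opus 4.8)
The plan is to split $\Phi = \Phi^{\mathrm{re}}\sqcup\Phi^{\mathrm{im}}$ and $\Phi^* = (\Phi^*)^{\mathrm{re}}\sqcup(\Phi^*)^{\mathrm{im}}$ into real and imaginary parts (the imaginary part empty in finite type) and produce the bijection on each piece, using the explicit description of root systems of finite and affine type together with the identification $W^*\cong W$ of Weyl groups from Remark~\ref{Weylremark}. For the real roots one has $\Phi^{\mathrm{re}} = W\cdot\{\alpha_i\}_{i\in I}$ and $(\Phi^*)^{\mathrm{re}} = W\cdot\{\alpha_i^*\}_{i\in I}$, and under $\iota$ the simple root $\alpha_i^*$ is $l_i\alpha_i$. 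So I would define $\phi(w(\alpha_i)) = w(\alpha_i^*)$, that is, $\iota(\phi(w(\alpha_i))) = l_i\,w(\alpha_i)$ in $X$. This is well defined: the symmetric bilinear form $\alpha_i\cdot\alpha_j = d_ia_{ij}$ on $Q = \bigoplus_i\mathbb Z\alpha_i$ is $W$-invariant, so if $w(\alpha_i) = w'(\alpha_j)$ then $2d_i = \alpha_i\cdot\alpha_i = \alpha_j\cdot\alpha_j = 2d_j$, whence $l_i = l_j$ (both equal $\ell/\gcd(\ell,d_i)$) and $w(\alpha_i^*) = l_iw(\alpha_i) = l_jw'(\alpha_j) = w'(\alpha_j^*)$.

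The map $\phi$ is visibly surjective, and it is injective because $l_iw(\alpha_i) = l_jw'(\alpha_j)$ forces the real roots $w(\alpha_i),w'(\alpha_j)$ of $\Phi$ to be positive rational multiples of one another, hence equal (in a finite or affine root system a real root is proportional to no root but $\pm$ itself, and the $l$'s are positive). Thus $\phi$ gives the desired bijection on real roots, with $l_\alpha = l_i\in\mathbb Z_{>0}$ for $\alpha\in W\cdot\{\alpha_i\}$. In finite type this finishes the proof, so suppose $(X,Y,I)$ is affine and indecomposable, and let $\delta = \sum_i a_i\alpha_i$ (respectively $\delta^* = \sum_i a_i^\sharp\alpha_i^*$) be the minimal positive imaginary root of $\Phi$ (respectively $\Phi^*$), with $(a_i)$, $(a_i^\sharp)$ the coprime positive marks, so that $\Phi^{\mathrm{im}} = \mathbb Z_{\neq 0}\,\delta$ and $(\Phi^*)^{\mathrm{im}} = \mathbb Z_{\neq 0}\,\delta^*$. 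The key claim is that $\iota(\delta^*) = q\delta$ for some $q\in\mathbb Z_{>0}$, and then $n\delta\mapsto n\delta^*$ is the bijection on imaginary roots with $l_{n\delta} = q$. To prove the claim, note first that for $x = \sum_i x_i\alpha_i\in Q$ one has, using the symmetrization identity, $x\cdot\alpha_j = d_j\langle\check\alpha_j,x\rangle$, so $x$ lies in the radical of the form on $Q$ iff $\langle\check\alpha_j,x\rangle = 0$ for all $j$; since $C$ has corank $1$ in affine type, this radical equals $\mathbb Q\delta$. Now $\langle\check\alpha_j,\delta\rangle = \sum_i a_ia_{ji} = 0$ for all $j$, and from $0 = \langle\check\alpha_j^*,\delta^*\rangle = l_j^{-1}\langle\check\alpha_j,\iota(\delta^*)\rangle$ we get $\langle\check\alpha_j,\iota(\delta^*)\rangle = 0$ for all $j$ as well; since $\iota(\delta^*)\in Q^*\subseteq Q$ (because $\alpha_i^* = l_i\alpha_i$), this puts $\iota(\delta^*)$ in $\mathbb Q_{>0}\,\delta$, and as $\delta$ is primitive in the free abelian group $Q$ (its coordinates $(a_i)$ being coprime) we conclude $\iota(\delta^*) = q\delta$ with $q\in\mathbb Z_{>0}$.

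Combining the two bijections — real and imaginary roots do not overlap, being separated by the sign of the $W$-invariant form — yields the bijection $\Phi\leftrightarrow\Phi^*$ with $\iota(\alpha^*) = l_\alpha\alpha$, $l_\alpha\in\mathbb Z_{>0}$, as claimed. The step I expect to be the only real obstacle is the integrality of the scalar in the imaginary case: passing from $\iota(\delta^*)\in\mathbb Q\delta$ to $\iota(\delta^*)\in\mathbb Z\delta$ uses that $\delta$ is primitive in $Q^*\subseteq Q$, which in turn rests on the explicit form of the imaginary roots available precisely in finite and affine type; for a general symmetrizable Cartan matrix, with its more complicated imaginary cone, both the statement and its proof would have to be reconsidered.
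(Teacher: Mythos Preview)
Your proof is correct and follows essentially the same strategy as the paper's: treat real roots via the identification $W^*\cong W$ and the relation $\alpha_i^*=l_i\alpha_i$, then handle the imaginary roots in the affine case by observing that both $\delta$ and $\iota(\delta^*)$ lie in the one-dimensional radical of the invariant form on $Q$. You are in fact more careful than the paper on two points---the well-definedness/injectivity of the real-root bijection, and the integrality of the scalar $q$ with $\iota(\delta^*)=q\delta$ (which the paper relegates to a subsequent remark)---but the underlying argument is the same.
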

\begin{proof}
For a simple root $\alpha_i$ the result follows from the definition. Next, as noted above, that the Weyl groups for the two root systems are identical, and hence if $\alpha \in \Phi$ is a real root, we may write it as $w(\alpha_i)$ for some $w \in W$ and $i \in I$, and thus 
\[
\alpha^* =w(\alpha_i^*) = l_iw(\alpha_i) = l_i \alpha
\]
(and hence $l_{\alpha} = l_i$). It remains to consider the imaginary roots, in the affine case. Let $(\cdot, \cdot)$ denotes an invariant symmetric bilinear form on $\mathbb Q \otimes_{\mathbb Z} Q$. Then the imaginary roots are exactly the elements $\alpha$ of $Q$ with $(\alpha,\alpha)=0$, and similarly the imaginary roots of $\Phi^*$ are exactly the elements of $Q^*$ of norm zero. Since $Q^* \subset Q$ and the space of norm zero elements is one dimensional, the result follows immediately.
\end{proof}

\begin{remark}
In fact, using \cite[Remark 6.1]{Kac} it is easy to see that $\delta^* = \ell \delta$, where $\delta$ and $\delta^*$ are the smallest imaginary roots of $\Phi$ and $\Phi^*$ respectively. It seems reasonable to conjecture that this Lemma holds for all symmetrizable root data.
\end{remark}

\begin{remark}
\label{finitetypecase}
When $C$ is an indecomposable Cartan matrix (\textit{i.e.} of finite type), the numbers $r_i=\frac{1}{2}(i \cdot i)$ are either $1$ or $r$ in the notation of \cite{FH}. Thus taking any symmetrizing vector $(d_i)_{i \in I}$ and $\ell$ divisible by $d$ we have
\[
l_i = d/d_i = r/r_i = r-r_i +1.
\]
Finally, the lattice here denoted $X$ is the weight lattice denoted $P$ in \cite[2.1]{FH} and the sublattice $X^*$ is there denoted $P'$ (thus we will define the weight lattice of our Langlands dual algebra to be $X^*$, rather than identifying it with $X^*$ as in \cite{FH}). It is interesting to note also that in the affine case one again has $r_i=r$ except in the case of $A_{2\ell}^{(2)}$ where $r=4$, and the $r_i$ take the values $1$,$2$ and $4$. Choosing $\ell =2$ in this case gives an isogeny to a root datum which is not of finite or affine type. While $A_{2\ell}^{(2)}$ is self-dual, and so excluded from the considerations of \cite{FH2}, our constructions should still give some interesting relations between representations of this algebra.
\end{remark}

\subsection{Roots of unity}

Let $\ell$ be any positive integer. Following \cite[35.1.3]{L93}, for $\ell$ even we set  $l=2\ell$, while if $\ell$ is odd we set $l= \ell$ or $2\ell$. We set $\A_\ell$ to be the quotient ring $\A/(\Phi_{l}(v))$ where $\Phi_l$ is the $l$-th cyclotomic polynomial, and then let $\Ud_{\ell}$ be the corresponding specialization $\A_\ell \otimes_{\A} (\Uda)$ of the modified form $\Ud$, and similarly let $\mathbf f_\ell$, $\mathbf U^{\pm}_\ell$ be the specializations $\A_\ell \otimes (\fA)$, $\A_\ell\otimes (_A\mathbf U^{\pm})$. More generally for any $\A$-algebra $R$ we will write $_R\Ud$ for the corresponding specialization of $\Uda$, and $_R\mathbf f$ for the specialization of $\fA$.

Let $\Ud^*$ be the modified quantum group attached to the $\ell$-modified root datum $(X^*,Y^*,I)$. To distinguish it from $\Ud$, we will write its generators as
\[
e_i^{(n)}1_\lambda, f_i^{(n)}1_\lambda, \quad (n \geq 0, i \in I, \lambda \in X^*),
\]
and write the generators of $\mathbf f_\ell$ as $\{\vartheta_i: i\in I\}$.
Since in $\mathcal A_\ell$ we have $(v_i^*)^2 = (v_i^{l_i^2})^2 = 1$, so that $v_i^* = \pm 1$, the algebra $\Ud^*_\ell$ is close to the classical enveloping algebra. 
In this note, we are interested in the most degenerate case, when $\ell = r$.

\section{The contracting homomorphism}
\label{contract}

In this section we recall the contracting homomorphism of \cite{M}, which gives an embedding of the algebra $\Ud^*_\ell$ into $\Ud_\ell$. This relies on the work of Lusztig on the quantum Frobenius homomorphism. Recall from \cite[chapter 35]{L93} that (under mild restrictions on $\ell$ -- see the remark below) there are two $\A_\ell$-homomorphisms $Fr\colon \mathbf f_\ell \to \mathbf f^*_\ell$ and $Fr' \colon \mathbf f^*_\ell \to \mathbf f_\ell$ which are given on generators by $Fr'(\vartheta_i^{(n)}) = \theta_i^{(nl_i)}$, and 
\[
Fr(\theta_i^{(n)}) = \left\{\begin{array}{cc}\vartheta_i^{(n/l_i)}, & \text{ if } l_i | n, \\0, & \text{otherwise}.\end{array}\right.
\]
Lusztig also shows that $Fr$ ``extends'' to a map $Fr\colon \Ud_\ell \to \Ud^*_\ell$ (in the sense that $\Ud_\ell$ is naturally a bimodule over $\mathbf U^\pm_\ell$, and $Fr$ on $\Ud_\ell$ is compatible with these bimodule structures). It is characterized by the conditions:
\[
Fr(E_i^{(n)}1_\lambda) = \left\{\begin{array}{cc} e_i^{(n/l_i)}1_\lambda, & \text{ if } l_i | n, \lambda \in X^*\\0, & \text{otherwise}.\end{array}\right.
\]
 and similarly $Fr(F_i^{(n)}1_\lambda) = f_i^{(n/l_i)}1_\lambda$, if $l_i$ divides $n$ and $\lambda \in X^*$, and to zero otherwise. Note that $Fr$ is obviously surjective.  A simple observation of \cite{M} is that the map $Fr'$ also has an extension to $\Ud^*_\ell$. Here the use of the modified form is essential, as although $Fr$ on $\mathbf U^{\pm}_\ell$ does extend to the ordinary quantum group $\mathbf U_\ell$, the map $Fr'$ does not extend to $\mathbf U^*_\ell$. The existence of the contraction map and the quantum Frobenius are (currently) conditional on some mild technical hypotheses. 

\begin{definition}
\label{oddcycles}
Let $C= (a_{ij})_{i,j \in I}$ be a generalized Cartan matrix. An odd cycle in $C$ is a sequence $i_1,i_2,\ldots,i_{p+1} = i_1$ in $I$ such that $p \geq 3$ is odd, and $a_{i_si_{s+1}} <0$ for each $s=1,2,\ldots,p$, that is, a cycle of odd length in the associated Coxeter graph. A generalized Cartan matrix $C$ has no odd cycles if and only if there is a function $i \mapsto a_i \in \{0,1\}$ such that $a_i+a_j=1$ whenever $a_{ij}<0$ (since a graph with no odd cycles is bipartite).
\end{definition}

A Cartan datum or root datum is said to have no odd cycles if its associated generalized Cartan matrix has no odd cycles. Note that this condition is satisfied by all finite-type and affine Cartan data except $A^{(1)}_{2n}$, and in that case the datum is self-dual, so it will not be of interest to us. 

\begin{prop}\cite{M}
\label{contracthom}
Suppose that $(X,Y,I)$ is a root datum, and let $\phi\colon \A \to R$ be a homomorphism which factors through the natural map $\A \to {\A_\ell}$ for some positive integer $\ell$. If $\ell$ is even assume that $(X,Y,I)$ has no odd cycles. Then there is a homomorphism $c\colon _R\Ud^* \to$ $_R\Ud$ given on generators by $e_i^{(n)}1_\lambda \mapsto E_i^{(nl_i)} 1_\lambda$ and $f_i^{(n)}1_\lambda \mapsto F_i^{(nl_i)} 1_\lambda$ where $\lambda \in X^* \subset X$.
\end{prop}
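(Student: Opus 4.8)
The plan is to construct $c$ first at $\ell$, as an $\A_\ell$-algebra homomorphism ${}_{\A_\ell}\Ud^*\to{}_{\A_\ell}\Ud$, and then obtain the statement over $R$ by base change; this is legitimate since $\phi$ factors through $\A\to\A_\ell$, so that ${}_R\Ud=R\otimes_{\A_\ell}{}_{\A_\ell}\Ud$ and likewise for $\Ud^*$. The starting point is Lusztig's homomorphism $Fr'\colon\mathbf f^*_\ell\to\mathbf f_\ell$ recalled above, $\vartheta_i^{(n)}\mapsto\theta_i^{(nl_i)}$; when $\ell$ is even it is precisely the construction of this map that needs the no-odd-cycles hypothesis, and that is the only point at which the hypothesis enters. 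Composing $Fr'$ with the maps $\pm$ shows that the proposed rule $e_i^{(n)}\mapsto E_i^{(nl_i)}$, $f_i^{(n)}\mapsto F_i^{(nl_i)}$ respects the quantum Serre relations of the $\ell$-modified Cartan datum $(I,\circ)$, so $c$ is already well defined and consistent on the triangular subalgebras $\mathbf U^{*,\pm}_\ell$.

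It remains to check compatibility with the other defining relations of $\Ud^*_\ell$, which together with the Serre relations present the modified algebra (compare the discussion in Section~\ref{background}): orthogonality of the idempotents $1_\lambda$; the commutation of $e_i,f_i$ past the $1_\lambda$ with the weight shift by $\alpha_i^*=l_i\alpha_i$; and the relation
\[
(e_if_j-f_je_i)1_\lambda=\delta_{ij}\,[\langle\check{\alpha}_i^*,\lambda\rangle]^*_i\,1_\lambda
\]
linking the two triangular halves, in which $[\,\cdot\,]^*_i$ denotes the quantum integer for the dual parameter $v_i^*=\pm1$. The first two kinds of relation are immediate, since $\iota\colon X^*\hookrightarrow X$ is a lattice inclusion carrying $\alpha_i^*$ to $l_i\alpha_i$, and for $i\ne j$ the elements $E_i$ and $F_j$ already commute in $\Ud_\ell$. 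The substantive case is the commutator relation for $i=j$. Applying the proposed $c$ to its left-hand side gives $(E_i^{(l_i)}F_i^{(l_i)}-F_i^{(l_i)}E_i^{(l_i)})1_\lambda$, and expanding $E_i^{(l_i)}F_i^{(l_i)}1_\lambda$ by Lusztig's straightening identity for opposite divided powers in $\Ud_\ell$ gives
\[
\sum_{s=0}^{l_i}{\langle\check{\alpha}_i,\lambda\rangle \brack s}_i\, F_i^{(l_i-s)}E_i^{(l_i-s)}1_\lambda .
\]
Since $\lambda\in X^*$ we have $\langle\check{\alpha}_i,\lambda\rangle=l_i\langle\check{\alpha}_i^*,\lambda\rangle$, a multiple of $l_i$, so the quantum binomial identities at a root of unity \cite{L93} force the coefficient ${\langle\check{\alpha}_i,\lambda\rangle \brack s}_i$ to vanish unless $l_i\mid s$; only $s=0$ and $s=l_i$ then survive, the first contributing $F_i^{(l_i)}E_i^{(l_i)}1_\lambda=c(f_ie_i1_\lambda)$ and the second the scalar ${l_i\langle\check{\alpha}_i^*,\lambda\rangle \brack l_i}_i$, which by the same identities specialises to $[\langle\check{\alpha}_i^*,\lambda\rangle]^*_i$. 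Hence both sides of the relation agree.

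With all relations checked, the rule extends to the desired homomorphism over $\A_\ell$, and base change along $\phi$ produces $c$ over $R$; evaluating on generators shows $Fr\circ c=\mathrm{id}$, so $c$ is injective and genuinely is an embedding. I expect the only real difficulty to be bookkeeping: fixing the precise form of Lusztig's straightening identity and of the root-of-unity binomial identities, and matching the normalisations of $v_i$ and $v_i^*$ so that the two binomial coefficients coincide on the nose. One can avoid re-deriving any of this by instead upgrading Lusztig's weaker splitting of the triangular Frobenius to the modified form, which is the route taken in \cite{M}.
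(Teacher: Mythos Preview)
The paper does not supply its own proof here; the proposition is quoted from \cite{M}, and the surrounding discussion (the introduction and Remark~\ref{contractionremark}) makes clear that the argument in \cite{M} deduces $c$ from Lusztig's map $Fr'\colon\mathbf f^*_\ell\to\mathbf f_\ell$. Your sketch is exactly this route, and it is correct in outline.

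Two refinements. First, the commutator relation you verify is only the rational one with $a=b=1$; to define $c$ on the integral form $_{\A_\ell}\Ud^*$ directly one must check the full family of divided-power straightening relations
\[
e_i^{(a)}f_i^{(b)}1_\lambda=\sum_{t\ge0}{\langle\check\alpha_i^*,\lambda\rangle+a-b\brack t}_i^*\,f_i^{(b-t)}e_i^{(a-t)}1_\lambda .
\]
The same calculation handles these: the straightening identity in $\Ud_\ell$ with $(N,M)=(al_i,bl_i)$ produces coefficients ${l_i(\langle\check\alpha_i^*,\lambda\rangle+a-b)\brack s}_i$, and Lusztig's Lucas-type identity \cite[34.1.2]{L93} forces $l_i\mid s$ and converts the surviving terms into the required $*$-binomials. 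This is precisely the ``bookkeeping'' you anticipate, but it is the step that actually needs to be written out.

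Second, your closing sentence invokes $Fr\circ c=\mathrm{id}$ to obtain injectivity, but the paper stresses in Remark~\ref{contractionremark} that $Fr$ is \emph{not} known to exist under the hypotheses of the proposition (the restrictions in \cite[Ch.~35]{L93} exclude, e.g., $\ell=d$ outside finite type). For exactly this reason the paper supplies a separate proof of injectivity at the end of Section~\ref{duality}, via faithfulness on the modules $\nabla^*(\lambda)^\omega\otimes\nabla^*(\mu)$. Injectivity is in any case not part of the proposition as stated, so this does not affect the validity of your argument for existence.
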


\begin{remark}
\label{contractionremark}
The proof of the existence $Fr$ in  \cite[chapter 35]{L93} holds with mild restrictions on $\ell$ in addition to the condition of no odd cycles, which in fact are not valid when $\ell = d$. For finite-type quantum groups Kaneda \cite{Ka} has verified that these restrictions can be removed, and indeed this fact was already stated in \cite{L93}. The existence of the map $c$ however, depends only on the existence of the map $Fr'$, not on $Fr$, and thus it is known to exist whenever the root datum has no odd cycles for arbitrary $\ell$, and for an arbitrary root datum if $\ell$ is odd. Indeed minor modifications of the arguments in \cite{L93} allow you to verify the existence of $Fr'$ without the restriction of no odd cycles when $\ell$ is odd, as is briefly sketched in \cite[35.5.2]{L93}.

Finally, it is worth noting that, as Lusztig already points out in \cite{L93}, the quantum Frobenius at $\ell =r$ provides a quantum analogue of Chevalley's exceptional isogenies between algebraic groups in positive characteristic. For example, in characteristic $2$, there is a natural map $\text{SO}_{2n+1} \to \text{Sp}_{2n}$ induced by the quotient map $\mathsf k^{2n+1} \to \mathsf k^{2n+1}/L$ where $L$ is the line fixed by $\text{SO}_{2n+1}$ (\textit{e.g.} the line spanned by $e_0$ if the quadratic form is $x_0^2 +\sum_{i=1}^n x_ix_{i+n}$). The quantum Frobenius for $B_n$ at $\ell=2$ is a lifting of this isogeny to characteristic zero. An elegant construction of all possible isogenies between reductive algebraic groups in any characteristic is given in \cite{St}.
\end{remark}

\section{Duality for representations}
\label{duality}
Let $\mathfrak g$ be a symmetrizable Kac-Moody Lie algebra with indecomposable generalized Cartan matrix $C$, and let $\mathcal O_{\text{int}}(\mathfrak g)$ be the full subcategory of the category of $\mathfrak g$-representations consisting of those representations $V$ which satisfy
\begin{enumerate}
\item
$V$ is a direct sum of its weight spaces $V_\lambda$, where $\lambda \in X$ the weight lattice.
\item 
The operators $e_i$, $f_i$ act locally nilpotently.
\item
There is a finite set of weights $K \subset X$ such that whenever the weight space $V_\mu \neq 0$ there is a $\lambda \in K$ with $\mu \leq \lambda$.
\end{enumerate}
By results of Gabber and Kac \cite{Kac}, the modules in category $\Oint(\mathfrak g)$ are completely reducible, and the simple modules are the standard modules $\{\nabla(\lambda): \lambda \in X^+\}$, whose characters are given by the Weyl-Kac character formula. Let $^L\mathfrak g$ be the Langlands dual Lie algebra, with generalized Cartan matrix $C^t$. In this section we will establish a duality between representations in the categories $\Oint(\mathfrak g)$ and $\Oint({^L\mathfrak g})$, provided $C$ has no odd cycles (see Definition \ref{oddcycles}).

Picking a symmetrizing vector $(d_i)_{i \in I}$ for $C$ we obtain a Cartan datum, and let $(X,Y,I)$ be an associated root datum which is $X$ and $Y$ regular (c.f. the paragraph after Definition \ref{rootdatumdef}). To establish the duality we will use the contraction map of the previous section. For the rest of this section unless otherwise stated, we assume that $d$ divides $\ell$ so that the generalized Cartan matrix of the $\ell$-modified datum is the transpose of $C$ (see Remark \ref{Cartanremark}).

Let $\Ud$ be the modified quantum group associated to the root datum $(X,Y,I)$. The category $\dot{\mathcal O}_\text{int}$ for $\Ud$ which was defined in Section \ref{background} gives a natural deformation of the $\mathcal O_{\text{int}}(\mathfrak g)$. Indeed over $\mathbb Q(v)$ (which we shall refer to as the ``generic''  case) it is a semisimple category (this follows from \cite[Chapter 6]{L93}.
Moreover, for each $\lambda \in X^+$ one can define highest weight modules $\nabla(\lambda)$ over $\mathcal A$ which are integral forms of the simple modules in $\dot{\mathcal O}_{\text{int}}$. Let $\Ud_1$ denote the algebra $\Uda$ specialized at $v =1$. It is shown in \cite[33.1.2]{L93} that the structure of an integrable highest weight module for $\mathbb C \otimes_\mathbb Z\Ud_1$ is equivalent to the structure of a $\Lieg$-integrable highest weight module, in a fashion which preserves weights and hence characters. It follows that the characters of the modules $\nabla(\lambda)$  are given by the Weyl-Kac character formula, so that $\dot{\mathcal O}_{\text{int}}$ gives a deformation of category $\Oint(\mathfrak g)$ in a fashion which preserves characters.

For each $\lambda \in X^+$ we may specialize the module $\nabla(\lambda)$ to $\mathcal A_\ell$ and obtain a so-called standard module which we will also write as $\nabla(\lambda)$ since the context should prevent any possibility of confusion.  Clearly the standard modules have characters given by the Weyl-Kac formula (for the root datum $(X,Y,I)$).

Recall that in $\Ud^*_\ell$ the parameters $v_i^* = \pm 1$. In \cite[33.2]{L93} specializations with this property are called \textit{quasiclassical}. Under the assumption that the root datum has no odd cycles, these specializations are in fact isomorphic to $\Ud^*_1$. More precisely, let $\phi\colon \mathcal A \to R$ be an $\mathcal A$-algebra, such that $\phi(v_i) \in \{\pm 1\}$ for each $i$. Then let $R_0$ be the same ring $R$ with the $\mathcal A$-algebra structure given by mapping $v \mapsto 1$. 

\begin{prop}
\label{quasiclassical}
Let $(X,Y,I)$ be a root datum with no odd cycles. Then there is an isomorphism of the algebras $_R\Ud$ and $_{R_0}\Ud$. Moreover, the isomorphism maps $1_\zeta \in {_R\Ud}$ to $1_\zeta \in {_{R_0}\Ud}$ ($\zeta \in X$) so that pulling back representations via this isomorphism preserves characters.
\end{prop}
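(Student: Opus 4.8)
The plan is to build the isomorphism at the level of the integral form $\Uda$, using the presentation of $_\mathcal A\dot{\mathbf U}$ in terms of the generators $E_i^{(n)}1_\lambda$ and $F_i^{(n)}1_\lambda$ and the $\mathbf U^{\pm}$-bimodule structure (cf.\ the remark after Definition \ref{present}), and then to descend to $_R\dot{\mathbf U}$ by base change along $\phi\colon\mathcal A\to R$ versus the twisted structure map $v\mapsto 1$ on $R_0$. The point is that the defining relations of $_\mathcal A\dot{\mathbf U}$ involve $v$ only through the quantities $v_i=v^{d_i}$, quantum integers $[n]_i$, binomial coefficients ${n\brack k}_i$, and the commutation relations of item (3) in the definition of $\mathbf U$, where $\tilde K_i$ acts on $1_\lambda$ by the scalar $v_i^{\langle\check\alpha_i,\lambda\rangle}$. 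When $\phi(v_i)\in\{\pm1\}$ for all $i$, every one of these scalars is a sign, and one wants to absorb those signs by rescaling the generators.

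Concretely, first I would use the no-odd-cycles hypothesis to fix a function $i\mapsto a_i\in\{0,1\}$ with $a_i+a_j\equiv 1\pmod 2$ whenever $a_{ij}<0$ (Definition \ref{oddcycles}). Then I would attempt to define the isomorphism $\Psi\colon {_R\dot{\mathbf U}}\to{_{R_0}\dot{\mathbf U}}$ on generators by a formula of the shape
\[
\Psi\big(E_i^{(n)}1_\lambda\big) = \epsilon_i(n,\lambda)\, E_i^{(n)}1_\lambda, \qquad
\Psi\big(F_i^{(n)}1_\lambda\big) = \epsilon_i'(n,\lambda)\, F_i^{(n)}1_\lambda, \qquad \Psi(1_\lambda)=1_\lambda,
\]
where the signs $\epsilon_i(n,\lambda),\epsilon_i'(n,\lambda)\in\{\pm1\}$ are built out of $a_i$, of $n$, and of $\langle\check\alpha_i,\lambda\rangle$, chosen precisely so that each relation of $_R\dot{\mathbf U}$ is carried to the corresponding relation of $_{R_0}\dot{\mathbf U}$. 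The verification then splits into: (a) the ``Cartan'' relations $E_i^{(n)}1_\lambda = 1_{\lambda+n\alpha_i}E_i^{(n)}$, which only constrain how $\epsilon_i(n,\lambda)$ depends on the argument; (b) the $i=j$ case of relation (3), where one matches $v_i^{\langle\check\alpha_i,\lambda\rangle}$ against $+1$ and the quantum integer $[\,\cdot\,]_i$ against an ordinary integer, using that $[m]_i$ specializes to $\pm m$; (c) the Serre-type relations among the $E_i^{(n)}$ (equivalently, the relations of $\mathbf f_\ell$), where the binomial coefficients and the interaction between $\theta_i$ and $\theta_j$ for $a_{ij}<0$ force the compatibility condition $a_i+a_j\equiv1$; and (d) the symmetric statements for the $F_i$'s and the mixed $E$–$F$ bimodule compatibilities. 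Since $\Psi$ manifestly sends $1_\lambda$ to $1_\lambda$, the final sentence about characters is then immediate: pulling back a representation $V$ via $\Psi$ leaves $\dim(\operatorname{im}(1_\lambda))$ unchanged for every $\lambda$, hence $\chi(V)$ is unchanged.

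I expect step (c) — checking the quantum Serre relations survive the sign-twist — to be the main obstacle, and it is exactly here that the no-odd-cycles hypothesis is used: the bipartiteness of the Coxeter graph is what allows a consistent global choice of signs $a_i$ making the off-diagonal relations transform correctly. One clean way to organize this is to reduce to the corresponding statement for $\mathbf f$ (i.e.\ an isomorphism ${_R\mathbf f}\cong{_{R_0}\mathbf f}$ rescaling $\theta_i\mapsto(-1)^{a_i}\theta_i$ on generators, extended to divided powers) — this is essentially Lusztig's observation that a quasiclassical $\mathbf f$ with no odd cycles is, up to twist, the $v=1$ specialization — and then to promote it to $\dot{\mathbf U}$ using the bimodule presentation, where the only genuinely new relations are (a) and (b), both of which are handled by a routine (if slightly fiddly) bookkeeping of the exponents of $-1$ coming from $\langle\check\alpha_i,\lambda\rangle$ and from the sign of $[n]_i$. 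I would present the argument by first stating the choice of signs explicitly, then checking relations (a),(b),(c),(d) in turn, and finally recording the character invariance.
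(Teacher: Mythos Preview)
Your approach is correct and is, in substance, exactly the construction being invoked: the paper's own proof consists entirely of a citation to Lusztig \cite[33.2.3]{L93}, together with the observation that the formulas there fix the idempotents $1_\zeta$ and hence preserve characters. What you have outlined---choosing a bipartition $i\mapsto a_i$ from the no-odd-cycles hypothesis, rescaling the generators $\theta_i$ (and hence $E_i^{(n)}1_\lambda$, $F_i^{(n)}1_\lambda$) by signs built from $a_i$, $n$, and $\langle\check\alpha_i,\lambda\rangle$, and then checking that the Serre, Cartan, and commutator relations transform correctly because every $v_i$-dependent quantity specializes to a sign---is precisely Lusztig's argument in \cite[33.2]{L93}. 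So you are not taking a different route; you are reconstructing the cited result rather than citing it. The only thing to be careful about in writing it up is getting the exact sign formula right (Lusztig's involves a combination like $\phi(v_i)^{a_i n + \binom{n}{2}}$ on $\theta_i^{(n)}$, and a compatible twist on the idempotent side), but your plan to reduce first to the statement for $\mathbf f$ and then promote via the bimodule presentation is the clean way to organize that bookkeeping.
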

\begin{proof}
The isomorphism was constructed by Lusztig in \cite[33.2.3]{L93}. Looking at the formulas there immediately establishes the assertion about characters.
\end{proof}

Thus $\Ud^*_\ell$ is isomorphic to the algebra $\mathcal A_\ell\otimes_\mathbb Z \Ud^*_1$. Extending scalars to $\mathbb C$ by picking a primitive root of unity, the discussion above (applied to $\Ud^*$ instead of $\Ud$) shows that the category $\dot{\mathcal O}_{\text{int}}$ for $\Ud^*_\ell$ is equivalent to category $\Oint$ for $^L\Lieg$, in a manner which preserves characters. 

We can now describe our duality. It is in the same spirit as, but thanks to the map $c^*$, simpler than, the representation-theoretic duality for two-parameter quantum groups proposed in \cite{FH}. Suppose that $W$ is a representation of $\Lieg$ in category $\Oint$. Then we may equivalently think of it as a representation of $\Ud_1$, and as such it is the specialization of a representation $W_v$ of $\Uda$. We may then specialize this representation instead to $\Ud_\ell$, to obtain a representation $W_\ell$. Using the contraction map $c$ we then obtain a representation of $\Ud^*_\ell$. Since any weight space of $W_\ell$ with weight $\mu \notin X^*$ is annihilated by $\Ud_\ell^*$, we define $c^*(W_\ell)$ to be the subspace of $W_\ell$ which is the direct sum of those weight spaces whose weight lie in $X^*$, taken as a $\Ud_\ell^*$ representation. Then by the above discussion we may view $c^*(W_\ell)$ as a representation of $^L\Lieg$ which is easily seen to lie in $\mathcal O_{\text{int}}({^L\mathfrak g})$. By abuse of notation, we will write $c^*(W)$ instead of $c^*(W_\ell)$ if there is no danger of confusion.

\begin{remark}
Notice that although our construction works on the level of representations, it is not given functorially, since we really only deform simple representations in $\mathcal O_{\text{int}}(\mathfrak g)$. This is perhaps due to the author's ignorance, and it would be interesting to know to what extent this can be refined.

Note also that if $\ell$ is odd, then the above duality can be constructed for an arbitrary root datum: the contraction homomorphism exists in this case, as was noted before in Remark \ref{contractionremark}, and moreover the same modification to Lusztig's proof of the existence of $Fr'$ (that is, using \cite[33.1]{L93} rather than \cite[33.2]{L93}) establishes the necessary relation between $\Ud^*_\ell$ representations and $^L\mathfrak g$ representations. 
\end{remark}

We now wish to study this duality at the level of characters and relate it to the duality of \cite{FH}. Recall the ring $\mathcal E$ introduced in Section \ref{background}. If $V$ is in category $\Oint$ for $\Lieg$ then we set, just as for representations of $\Ud_\ell$,
\[
\chi(V) = \sum_{\lambda \in X} \text{dim}(V_\lambda) e^{\lambda} \in \mathcal E.
\]
and this embeds $K_0(\Oint)$ into $\mathcal E^W$. Let $\chi^L$ be the corresponding map for $\Ud^*_\ell$. It is immediate from the definitions that
\[
\chi^L(c^*(V)) = \Pi_\ell(\chi(V))
\]
where $\Pi_\ell(e^{\lambda}) = e^\lambda$ if $\lambda$ lies in $X^*$ and zero otherwise. For $\lambda \in X^+$ let $\chi(\lambda)$ be the Weyl character attached to $\lambda$, and similarly for $\mu \in X^+ \cap X^*$ let $\chi^L(\mu)$ be the Weyl character (for the datum $(X^*, Y^*,I)$) attached to $\mu$. Note that since $X^*$ is $W$-invariant (see Remark \ref{Weylremark}), the map $\Pi$ is $W$-equivariant for the obvious actions of $W$ on $\mathbb Z[X]$ and $\mathbb Z[X^*]$ respectively.

\begin{remark}
To compare with \cite{FH}, where $C$ is an indecomposable Cartan matrix, and take $\ell=d$ the least common multiple of the $d_i$s. Recall that by Remark \ref{finitetypecase} in this case we have $l_i = r/r_i = 1+r-r_i$. In \cite{FH}, the authors use an auxillary map $P' \to P^L$ (in the notation of that paper), given by
\[
\lambda \mapsto \sum_{i \in I} \lambda(\check{\alpha}_i)(1+r-r_i)^{-1}\check{\varpi}_i, \lambda \in P',
\]
and they define $\Pi \colon P \to P^L$ by extending this map by zero outside $P'$. In the notation of this paper, we have identified $P'$ with $P^L$ as $X^*$, (this is implicit in attaching a full root datum to $X^*$). Via these identifications, the map $\Pi_d$ here coincides with the map $\Pi$ of \cite{FH}. \end{remark}

\begin{prop}
Suppose that $(X,Y,I)$ is a root datum with no odd cycles, or that $\ell$ is odd, and let $\lambda \in X^+\cap X^*$. Then
\[
\Pi_\ell(\chi(\lambda)) = \chi^L(\lambda) + \sum_{\mu < \lambda, \mu \in X^*} m_{\mu}^\lambda \chi^L(\mu).
\]
where $m_\mu^\lambda$ is a nonnegative integer.
\end{prop}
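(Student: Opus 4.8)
The plan is to apply the construction $c^*$ of Section~\ref{duality} to the standard module $\nabla(\lambda)$ and then read off the claim from the complete reducibility of $\Oint({}^L\mathfrak g)$. First I would observe that, since $\lambda\in X^+$, the standard module $\nabla(\lambda)$ has character $\chi(\nabla(\lambda))=\chi(\lambda)$ by the Weyl--Kac formula, and characters are unchanged under specialization, so the $\Ud_\ell$-module obtained from $\nabla(\lambda)$ (which we also denote $\nabla(\lambda)$) again has character $\chi(\lambda)$. Combining this with the identity $\chi^L(c^*(V))=\Pi_\ell(\chi(V))$ recorded above in this section, we get $\chi^L(c^*(\nabla(\lambda)))=\Pi_\ell(\chi(\lambda))$, so everything reduces to understanding $c^*(\nabla(\lambda))$ as a ${}^L\mathfrak g$-module.

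Next I would invoke the discussion of Section~\ref{duality}: using Proposition~\ref{contracthom} (valid since the datum has no odd cycles, or via the variant for odd $\ell$) the space $c^*(\nabla(\lambda))$ carries an action of $\Ud^*_\ell$, and by Proposition~\ref{quasiclassical} together with \cite[33.1.2]{L93} the resulting module corresponds, in a character-preserving way, to a ${}^L\mathfrak g$-module lying in $\Oint({}^L\mathfrak g)$. Now the theorem of Gabber--Kac recalled at the start of the section says $\Oint({}^L\mathfrak g)$ is semisimple, with simple objects the standard modules $\nabla^L(\mu)$, $\mu\in X^+\cap X^*$. Hence $c^*(\nabla(\lambda))\cong\bigoplus_{\mu\in X^+\cap X^*}\nabla^L(\mu)^{\oplus m^\lambda_\mu}$ with each $m^\lambda_\mu\in\mathbb Z_{\geq 0}$, and taking characters gives $\Pi_\ell(\chi(\lambda))=\sum_\mu m^\lambda_\mu\chi^L(\mu)$ with all coefficients nonnegative. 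This already delivers the positivity assertion.

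Finally I would identify which $\mu$ can occur. Every weight of $c^*(\nabla(\lambda))$ is a weight of $\nabla(\lambda)$, hence is $\leq\lambda$; and since $\lambda\in X^+\cap X^*$ the weight $\lambda$ itself occurs, with multiplicity $\dim\nabla(\lambda)_\lambda=1$. The highest weight of any summand $\nabla^L(\mu)$ is a weight of $c^*(\nabla(\lambda))$, so $\mu\leq\lambda$; and since $\chi^L(\mu)=e^\mu+(\text{lower-order terms})$ and only the summand $\nabla^L(\lambda)$ can meet the one-dimensional $\lambda$-weight space, comparing the coefficients of $e^\lambda$ on both sides forces $m^\lambda_\lambda=1$. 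All other $\mu$ then satisfy $\mu<\lambda$ (and automatically lie in $X^*$), which is exactly the stated form.

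I do not expect a serious obstacle here: the one substantive input is that $c^*(\nabla(\lambda))$ genuinely underlies a ${}^L\mathfrak g$-module in $\Oint$, which is precisely what the contraction homomorphism $c$ of Proposition~\ref{contracthom} provides, and once that is in hand the remainder of the argument is formal, resting on the semisimplicity of $\Oint({}^L\mathfrak g)$.
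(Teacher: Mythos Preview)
Your proposal is correct and follows essentially the same route as the paper's own proof: apply the representation-theoretic duality $c^*$ to $\nabla(\lambda)$, use $\chi^L(c^*(V))=\Pi_\ell(\chi(V))$, invoke semisimplicity of $\dot{\mathcal O}_{\text{int}}$ for $\Ud^*_\ell$ (equivalently $\Oint({}^L\mathfrak g)$), and then read off $m^\lambda_\lambda=1$ and $\mu\leq\lambda$ from the weight structure. The only difference is that you spell out the intermediate identifications in slightly more detail than the paper does.
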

\begin{proof}
We simply apply our representation-theoretic duality to the simple highest weight module $V$ of highest weight $\lambda$. Then $V_\ell$ is the standard module $\nabla(\lambda)$ for $\Ud_\ell$ of highest weight $\lambda$. Taking $c^*(V_\ell)$ we obtain a representation for $\Ud^*_\ell$ which has character $\Pi_\ell(\chi(\lambda))$ by the remark preceeding the proposition. But as discussed above, the representations of $\Ud^*_\ell$ in category $\dot{\mathcal O}_{\text{int}}$ are semisimple and the characters of simples are given by Weyl's formula, thus $\chi^L(c^*(V_\ell))$ is a positive sum of Weyl characters $\chi^L(\mu)$. Since all the weights in $\nabla(\lambda)$ are less than $\lambda$, we must also have $\mu \leq \lambda$, and as the highest weight occurs with multiplicity $1$,we see the coefficient of $\chi^L(\lambda)$ must be $1$ as claimed.
\end{proof}

\begin{remark}
Essentially the same map on representations is considered by Littelmann in \cite{Li} in his construction of standard monomials via quantum groups. One could consider the positive characteristic analogue of $c$ (the Frobenius splitting map), but then the categories of representations one has to consider are more complicated.
\end{remark}

\begin{remark}
In \cite{FH} the authors connect the character duality map to two-parameter deformations of quantum groups, where one of the parameters is specialized to a root of unity. It would be very interesting to understand what those deformations have to do with quantum isogenies. In another direction, the map $c$ acts on representations which are not necessarily in category $\dot{\mathcal O}_{\text{int}}$. For example, one could consider integrable representations of affine quantum groups at level zero. It was shown in \cite{M} that $c$ is compatible with extremal weight modules in level zero, thus it seems likely that it would act sensibly on $q$-characters (which however are usually defined only on finite dimensional representations). Very recent work of Frenkel and Hernandez \cite{FH2} investigates a duality for $q$-characters, and earlier work of Frenkel and Mukhin \cite{FM} has already studied a notion of $q$-characters at roots of unity. One can hope the techniques of this paper can be connected to these theories.

\end{remark}

We end this section with an application of the above results. In \cite{M} it is asserted that $c$ is an embedding. Since this was proved using the Frobenius map $Fr$, which is not known to exist in the same generality as $c$ is, we give an alternative proof of this fact so that the map $c^*$ is always a restriction of representations to a subalgebra. Note that here we need not assume that $\ell$ is divisible by $d$ as this was done earlier only to ensure that $\Ud^*_\ell$-representations correspond to $^L\mathfrak g$-representations. The semisimplicity of representations of $\Ud^*_\ell$ in $\dot{\mathcal O}_{\text{int}}$ is all that is needed in the following. 

\begin{lemma}
The map $c\colon \Ud^*_\ell \to \Ud_\ell$ is injective.
\end{lemma}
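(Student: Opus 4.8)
The plan is to deduce injectivity of $c$ from the known behavior of the modified quantum group on the semisimple category $\dot{\mathcal O}_{\text{int}}$ over $\mathbb Q(v)$, passed through the specialization. Concretely, I would argue that if $c$ killed a nonzero element $u \in \Ud^*_\ell$, then $u$ would act by zero on every object of $\dot{\mathcal O}_{\text{int}}$ for $\Ud^*_\ell$ that is obtained by pulling back, via $c$, a module from $\dot{\mathcal O}_{\text{int}}$ for $\Ud_\ell$; but the collection of such modules (the standard modules $\nabla(\lambda)$, $\lambda \in X^+\cap X^*$, restricted through $c$) is large enough that the only element of $\Ud^*_\ell$ annihilating all of them is zero. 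The semisimplicity of $\dot{\mathcal O}_{\text{int}}$ for $\Ud^*_\ell$, together with the fact (from Section \ref{background}) that $\Uda^*$ has a canonical basis $\dot{\mathbf B}$ and that each element $b \in \dot{\mathbf B}$ lives in some $1_\mu \Uda^* 1_\nu$ and acts nontrivially on a suitable tensor product of a highest and lowest weight module, is what powers the "faithfulness on enough modules" statement.

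First I would recall that $\Ud^*_\ell$, being a modified quantum group at a root of unity for the $\ell$-modified datum, acts faithfully on the direct sum of all its simple modules in $\dot{\mathcal O}_{\text{int}}$ — indeed on a suitable family of standard modules $\nabla^L(\mu)$ — because each canonical basis element $b\dot{1}_\mu$ is detected on the tensor product of an integrable highest and lowest weight module of sufficiently dominant highest (resp. antidominant lowest) weight, by Lusztig's construction of $\dot{\mathbf B}$ (Remark \ref{Udotinfo}); this is the $X^*$-regular case, which holds here since $(X^*,Y^*,I)$ inherits regularity. Second, I would show that every such $\Ud^*_\ell$-module $M$ arising as $c^*(\nabla(\lambda))$ (or more generally pulled back through $c$ from a module in $\dot{\mathcal O}_{\text{int}}$ for $\Ud_\ell$) has the property that $u\in\ker(c)$ necessarily acts as zero on $M$, simply because the $\Ud^*_\ell$-action on $c^*(W_\ell)$ factors through $c$ by construction. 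Third — and this is the crux — I must check that the modules obtainable in this way separate elements of $\Ud^*_\ell$, i.e. that $\bigcap_M \mathrm{Ann}_{\Ud^*_\ell}(M) = 0$ where $M$ ranges over $c^*(\nabla(\lambda))$, $\lambda \in X^+ \cap X^*$. For this I would use that the standard modules $\nabla(\lambda)$ over $\Ud_\ell$, as $\lambda$ ranges over $X^+$, faithfully represent $\Ud_\ell$ on their direct sum (again via the canonical basis), and that the subspace $c^*(\nabla(\lambda))$ — the sum of weight spaces with weight in $X^*$ — is, as a $\Ud^*_\ell$-module, rich enough: every canonical basis element $e(b)\dot{1}_\mu$ of $\Uda^*$ with $\mu\in X^*$ sufficiently dominant is nonzero on a highest-weight vector sitting inside some $c^*(\nabla(\lambda))$ with $\lambda - \mu$ large and dominant.

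The main obstacle will be step three: verifying that the weight-truncated subspaces $c^*(\nabla(\lambda))$ really do carry enough of the $\Ud^*_\ell$-module structure to be jointly faithful. The subtlety is that $c^*(\nabla(\lambda))$ is only a \emph{subspace} of $\nabla(\lambda)$, not a sub-$\Ud_\ell$-module, so one cannot directly invoke faithfulness of $\Ud_\ell$ on the $\nabla(\lambda)$; one must instead track how the divided powers $e_i^{(n)} = E_i^{(nl_i)}$, $f_i^{(n)} = F_i^{(nl_i)}$ move weights within the $X^*$-sublattice and argue — using that $X^*\cap X^+$ contains arbitrarily dominant weights and that the top of $\nabla(\lambda)$ restricted to $X^*$ behaves like a highest weight module for $\Ud^*_\ell$ — that the matrix coefficients of any $b\dot 1_\mu$ ($b\in\dot{\mathbf B}$) can be realized. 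I expect this to go through by the standard device of choosing $\lambda$ of the form $\mu + \lambda'$ with $\lambda'$ deep in $X^*\cap X^+$ and invoking the stability of canonical basis matrix coefficients in $\Uda^* \dot 1_\mu$ under increasing the highest weight, exactly as in Lusztig's proof that $\dot{\mathbf B}$ is a basis; but it is the step that requires genuine care rather than formal manipulation.
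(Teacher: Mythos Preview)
Your overall strategy --- show that any element in $\ker(c)$ annihilates a family of $\Ud^*_\ell$-modules which is jointly faithful --- is exactly the paper's strategy. But your execution has a genuine gap, and it is precisely at the step you flag as the ``main obstacle.''

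You propose to prove that $\bigcap_\lambda \mathrm{Ann}_{\Ud^*_\ell}\big(c^*(\nabla(\lambda))\big)=0$, where $\lambda$ ranges over $X^+\cap X^*$. For this you want to invoke the faithfulness of $\Ud^*$ on its canonical representations. But the result you yourself cite from Remark~\ref{Udotinfo} is that $\dot{\mathbf B}$ is detected on tensor products $\nabla^*(\lambda)^\omega\otimes\nabla^*(\mu)$ of an integrable \emph{lowest} and an integrable \emph{highest} weight module; it is \emph{not} a statement about highest weight modules alone. In the general Kac--Moody setting there is no reason the $\nabla^*(\mu)$ by themselves separate points of $\Ud^*_\ell$: an element $b_1^-b_2^+1_\zeta$ with $b_2\neq 1$ kills every highest weight vector, and your suggestion of ``applying to a highest-weight vector sitting inside some $c^*(\nabla(\lambda))$'' cannot detect such elements. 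Choosing $\lambda$ deep in the dominant cone does not help, since the issue is the $\mathbf U^+$-part, not stabilisation of $\mathbf U^-$-matrix coefficients.

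The paper closes this gap with two ingredients you omit. First, rather than analysing $c^*(\nabla(\lambda))$ directly, it observes that by the semisimplicity of $\dot{\mathcal O}_{\text{int}}$ for $\Ud^*_\ell$ the simple module $\nabla^*(\lambda)$ sits inside $c^*(\nabla(\lambda))$ as a summand (the highest weight vectors match). Second --- and this is the step really missing from your outline --- it checks that $c$ is compatible with the coproduct on $\Ud_\ell$ and $\Ud^*_\ell$, so that $c^*$ respects tensor products; hence $c^*\big(\nabla(\lambda)^\omega\otimes\nabla(\mu)\big)$ contains $\nabla^*(\lambda)^\omega\otimes\nabla^*(\mu)$. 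Now an element of $\ker(c)$ annihilates all such tensor products, and Lusztig's result \cite[23.3]{L93} (that $\bigcap_{\lambda,\mu}P(\zeta,\lambda,\mu)=0$) finishes the argument. Your proposal would be repaired by inserting exactly these two observations in place of the canonical-basis stabilisation argument you sketch.
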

\begin{proof}
Note that since $\Ud^*_\ell$ is free as an $\mathcal A_{\ell}$-module, thus it suffices to check the map is an injective after we extend scalars to $\mathbb C$. Let $\omega\colon \Ud^* \to \Ud^*$ be the involutive automorphism  given by $\omega(e_i^{(n)}1_\lambda) = f_i^{(n)}1_{-\lambda}$. Twisting by $\omega$ interchanges highest and lowest weight modules. For $\lambda, \mu \in X^+$ let $\nabla^*(\lambda)^{\omega}$ and $\nabla^*(\mu)$ be the standard modules of lowest weight $-\lambda$ and highest weight $\mu$ respectively. By \cite[23.3.8]{L93} If $v_\lambda \in \nabla^*(\lambda)^{\omega}$ and $v_{\mu}\in \nabla^*(\mu)$ are of weight $-\lambda$ and $\mu$ respectively, and $\zeta = \mu-\lambda$, then the map $\Ud1_\zeta \to\nabla^*(\lambda)^\omega \otimes \nabla^*(\mu)$ given by $u1_\zeta \mapsto u1_\zeta(v_{\lambda}\otimes v_\mu)$ is surjective. Let $P(\zeta,\lambda,\mu)$ be its kernel. It follows readily from the results of \cite[23.3]{L93} that the intersection of these ideal over all $\lambda, \mu$ with $\lambda-\mu = \zeta$ is zero. Note moreover that these results all hold over $\mathcal A$, and hence over any ring (see \cite[31.2]{L93} for details). 

Next we have already seen that for $\lambda \in X^*\cap X^+$ the module $c^*(\nabla(\lambda))$ contains $\nabla^*(\lambda)$ (here we use the fact that we are in the quasiclassical case, so that $c^*(\nabla(\lambda))$ is semisimple and $\nabla^*(\mu)$ is simple for all $\mu \in X^*$), and the highest weight spaces correspond. But now one can check directly that $c$ is compatible with the ``coproduct'' on $\Ud_\ell$ and $\Ud^*_\ell$ (see \cite[23.1.5]{L93}), so that it respects tensor products of modules. Thus if $u1_\zeta \in \Ud^*_\ell$ lies in the kernel of $c$, then $u1_\zeta$ annihilates $c^*(\nabla(\lambda)^\omega \otimes \nabla(\mu))$ for all $\lambda, \mu \in X^*$ with $\lambda - \mu = \zeta$, and hence all the modules $\nabla^*(\lambda)^\omega \otimes \nabla^*(\mu)$, so by the above it must be zero as required.
\end{proof}

\begin{remark}
The existence of special isogenies (the positive characteristic analogues of the $\ell=r$ quantum Frobenius for finite type quantum groups) has been used by Kumar and Stembridge \cite{KS} to establish certain inequalities on tensor product multiplicities for a group and its dual group. They use characteristic $p$ methods rather than quantum groups, but one can also use quantum isogenies to establish their results. It would be interesting to know if the splitting map is useful in their context.
\end{remark}

\section{On Langlands duality branching rules: combinatorics.}
\label{combinatorialbranching}

In this section we show that the multiplicities $m_\mu^\lambda$ which occur in the character duality can be interpreted as tensor product multiplicities. We shall work purely combinatorially, and so $C$ can be an arbitrary symmetrizable generalized Cartan matrix (but we assume still that our root datum is $X$ and $Y$ regular so that we have a dominant cone $X^+$ and partial order $\leq$ on $X$). Since tensor product multiplicities are manifestly positive, we also obtain a proof that $\Pi_\ell(\chi(\lambda))$ is the character of a representation of the dual Lie algebra for any symmetrizable Kac-Moody Lie algebra. 

Let $\Phi$ be the set of roots of the Kac-Moody Lie algebra attached to $(X,Y,I)$ and $\Phi^*\subset X^*$ the set of roots for the dual Lie algebra. Pick a Weyl vector $\rho \in X$, so that $\langle \check{\alpha}_i, \rho\rangle = 1$ for all $i \in I$. If $\{\varpi_i: i \in I\}$ denote a choice of fundamental weights (which we may assume lie in $X$) then we may take $\rho = \sum_{i \in I} \varpi_i$. We write $w\cdot \lambda$ for the $\rho$-shifted action of $W$ on $X$, that is
\[
w\cdot \lambda = w(\lambda + \rho) - \rho.
\]

If $\nu \in X^+$, then for all $w \in W$ we have $\nu - w(\nu) \in \sum_{i \in I}\mathbb N\alpha_i$. It follows that $A_\nu = \sum_{w \in W} \varepsilon(w) e^{w\cdot \nu}$ lies in $\mathcal E$ for any $\nu \in  X^+$. Since $A_{w\cdot \nu} = \varepsilon(w)A_\nu$ we see $A_\nu \in \mathcal E$ for any $\nu \in W\cdot X^+$. Set $\mathfrak D = \prod_{\alpha \in \Phi, \alpha>0} (1-e^{-\alpha})^{n_\alpha} \in \mathcal E$, where $n_\alpha$ is the dimension of the $\alpha$ root space in the Kac-Moody algebra. For $\lambda \in X^+$, the Weyl-Kac character formula for the irreducible highest weight representation $\nabla(\lambda)$ of highest weight $\lambda \in X^+$ states that
\[
\chi(\lambda) = \chi(\nabla(\lambda)) = \mathfrak D^{-1}.A_\lambda,
\]
(note that $\mathfrak D^{-1}$ and $A_\lambda$ lie in $\mathcal E$, so the product is well-defined). Note that, as for the $A_\nu$, the above expression makes sense for any $\lambda \in W\cdot X^+$, not just $\lambda \in X^+$, but clearly if $v \in W$ then $\chi(v \cdot \lambda) = (-1)^{\ell(v)}\chi(\lambda)$, with both sides zero when $\lambda + \rho$ is not regular.

Let $\rho^L = \sum_{i \in I} l_i \varpi_i \in X^*$, a Weyl vector for the datum $(X^*,Y^*,I)$. Then we have a similar expression for the characters of the Langlands dual Kac-Moody algebra in terms of its positive roots and the $\rho^L$-shifted action of $W$ (recall by Remark \ref{Weylremark} that the Weyl groups of the two root data are canonically isomorphic and the natural action of $W$ on $X^*$ is the restriction of that on $X$). We will need some combinatorial lemmas. The key formula in the next lemma goes back to Brauer.

\begin{lemma}
\label{combinatorialavoidance}
Let $\xi \in \mathcal E^W$, that is, $\xi = \sum_{\lambda \in X} a_\lambda e^{\lambda} \in \mathcal E$ and $a_\lambda = a_{w(\lambda)}$ for all $w \in W$, $\lambda \in X$. Then 
\[
\xi.\chi(\nu) = \sum_{\lambda \in X} a_\lambda \chi(\lambda+ \nu).
\]
Moreover it follows that if $\text{supp}(\xi) \subset X\backslash X^*$, that is, $a_\lambda = 0$ if $\lambda \in X^*$, then $\xi.\chi(\rho^L-\rho)$ lies in the span of Weyl characters of the form $\chi(\mu)$ with $\mu \in X^+$ and $\mu \notin (\rho^L-\rho) + X^*$.
\end{lemma}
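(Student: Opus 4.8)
\emph{Plan.} The plan is to obtain the first identity (``Brauer's formula'') from the Weyl--Kac character formula by reindexing a double sum via the $W$-invariance of $\xi$, and then to derive the avoidance statement by writing $\xi\cdot\chi(\rho^L-\rho)$ as a $\mathbb{Z}$-combination of Weyl characters $\chi(\mu)$ and showing that the multiplicity of $\chi(\mu)$ is forced to vanish whenever $\mu\in(\rho^L-\rho)+X^*$, because $X^*$ is a $W$-stable subgroup of $X$ containing $\rho^L$.

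\emph{The Brauer formula.} Write $\widetilde{A}_\eta=\sum_{w\in W}\varepsilon(w)e^{w(\eta)}$ for the un-$\rho$-shifted alternation, so that $A_\nu=e^{-\rho}\widetilde{A}_{\nu+\rho}$ and $\chi(\nu)=\mathfrak{D}^{-1}A_\nu$. I would expand
\[
\xi\cdot\widetilde{A}_{\nu+\rho}=\sum_{\lambda\in X}\sum_{w\in W}a_\lambda\,\varepsilon(w)\,e^{\lambda+w(\nu+\rho)},
\]
substitute $\lambda=w(\eta)$ in the inner sum for each fixed $w$, and use $a_{w(\eta)}=a_\eta$ to regroup this as $\sum_{\eta\in X}a_\eta\widetilde{A}_{\eta+\nu+\rho}$; here the coefficient of any fixed $e^\kappa$ is a finite sum since $\text{supp}(\xi)$ lies in finitely many $D(\mu_j)$ and $\nu+\rho$ is $W$-regular, so all the manipulations take place inside the ring $\mathcal{E}$. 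Multiplying through by $e^{-\rho}$ and then by $\mathfrak{D}^{-1}$ yields $\xi\cdot\chi(\nu)=\sum_\eta a_\eta\chi(\eta+\nu)$, where $\chi(\eta+\nu):=\mathfrak{D}^{-1}A_{\eta+\nu}$ is read via the alternating convention recalled just before the lemma (zero when $\eta+\nu+\rho$ is $W$-singular, $\varepsilon(w)\chi(w\cdot(\eta+\nu))$ when $w\cdot(\eta+\nu)$ is dominant).

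\emph{The avoidance statement.} Apply this with $\nu=\rho^L-\rho$; this lies in $X^+$ since $\langle\check{\alpha}_i,\rho^L-\rho\rangle=l_i-1\geq 0$, so $\chi(\rho^L-\rho)$ is a genuine Weyl--Kac character and $\xi\cdot\chi(\rho^L-\rho)\in\mathcal{E}^W$. Writing it (using that $\mathcal{E}^W$ is spanned over $\mathbb{Z}$ by the $\chi(\mu)$, $\mu\in X^+$, via the usual highest-weight reduction) as $\sum_{\mu\in X^+}c_\mu\chi(\mu)$, I would compute $c_\mu$ by multiplying through by $\widetilde{A}_\rho=e^\rho\mathfrak{D}$ (which sends $\chi(\sigma)\mapsto\widetilde{A}_{\sigma+\rho}$, using the denominator identity) and reading off the coefficient of the strictly dominant weight $e^{\mu+\rho}$: for $\mu,\sigma\in X^+$ only $w=1$ in $\widetilde{A}_{\sigma+\rho}$ can produce $\mu+\rho$, and only when $\sigma=\mu$, so
\[
c_\mu=\big[e^{\mu+\rho}\big]\big(\xi\cdot\widetilde{A}_{\rho^L}\big)=\sum_{w\in W}\varepsilon(w)\,a_{\,\mu+\rho-w(\rho^L)}.
\]
Now suppose $\mu\in X^+$ lies in $(\rho^L-\rho)+X^*$, i.e. $\mu+\rho\in\rho^L+X^*$. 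Since $\langle\check{\alpha}_i,\rho^L\rangle=l_i\in l_i\mathbb{Z}$ we have $\rho^L\in X^*$, and $X^*$ is a $W$-stable subgroup of $X$ by Remark \ref{Weylremark}; hence $\mu+\rho-w(\rho^L)\in(\rho^L+X^*)-w(\rho^L)\subseteq X^*$ for every $w\in W$. By hypothesis $a_\zeta=0$ for $\zeta\in X^*$, so every term of the sum for $c_\mu$ vanishes and $c_\mu=0$. Therefore $\xi\cdot\chi(\rho^L-\rho)$ involves only Weyl characters $\chi(\mu)$ with $\mu\in X^+$ and $\mu\notin(\rho^L-\rho)+X^*$, as required.

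\emph{Main difficulty.} The calculation is essentially bookkeeping; the only delicate points are keeping track of the $\rho$-shifts through the reindexing and checking that the regrouped sums remain well-defined elements of $\mathcal{E}$. The genuine content of the avoidance statement is carried entirely by the two observations that $\rho^L\in X^*$ and that $X^*$ is $W$-stable, both of which are immediate from the root-datum formalism.
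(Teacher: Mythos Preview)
Your argument is correct and follows essentially the same approach as the paper. For the Brauer formula you perform the same reindexing $\lambda\leftrightarrow w(\eta)$ using $W$-invariance of the coefficients, and for the avoidance statement you rely on exactly the two facts the paper uses, namely $\rho^L\in X^*$ and the $W$-stability of $X^*$. The only organizational difference is that in the second part the paper applies the first formula directly and then rewrites each resulting highest weight as $w(\eta)+(w(\rho^L)-\rho^L)+(\rho^L-\rho)$ to see it misses $(\rho^L-\rho)+X^*$, whereas you instead extract the coefficient $c_\mu=\sum_{w}\varepsilon(w)\,a_{\mu+\rho-w(\rho^L)}$ and observe each summand vanishes; these are two phrasings of the same computation.
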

\begin{proof}
Clearly for the first part of the Lemma it is enough to establish the identity:
\[
\xi.A_\nu = \sum_{\lambda \in X} a_\lambda A_{\lambda+ \nu}.
\]
Now we have 
\[
\begin{split}
\xi.A_\nu &= \sum_{\lambda \in X}\sum_{w \in W} a_\lambda e^{\lambda} \varepsilon(w)e^{w\cdot \nu} \\
&= \sum_{\lambda \in X}\sum_{w \in W} a_\lambda \varepsilon(w) e^{w\cdot(\nu + w^{-1}(\lambda))}.
\end{split}
\]
Interchanging the order of summation we get:
\[
\begin{split}
&= \sum_{w\in W}\sum_{\lambda \in X}a_{\lambda}\varepsilon(w) e^{w\cdot(\nu+w^{-1}(\lambda))} \\
&=  \sum_{w\in W}\sum_{\eta \in X} a_\eta \varepsilon(w)e^{w\cdot(\nu+\eta)}, \quad (\eta = w^{-1}(\lambda)),\\
&= \sum_{\eta \in X} a_\eta A_{\nu+\eta},
\end{split}
\]
where in the second line we used the $W$-invariance of the $a_\lambda$s, and we reversed the order of summation again in the last line. Note that since $\nu+\eta$ may not be dominant, this is not necessarily a positive sum of $A_\lambda$s for $\lambda \in X^+$ even if the $a_\mu$s are all positive integers. 

Applying the formula in the first part of the Lemma to $\xi \in \mathbb Z[X\backslash X^*]^W$ and $\nu = \rho^L-\rho$ and using the fact that $\chi(w\cdot \lambda)= (-1)^{\ell(w)}\chi(\lambda)$, we see that all the Weyl characters which can occur in the product $\xi.\chi(\rho^L-\rho)$ have highest weights of the form 
\[
w(\rho^L+\eta)-\rho = w(\eta) + (w(\rho^L)-\rho^L) +(\rho^L-\rho),
\]
where $\eta \in X^+$ and $a_\eta \neq 0$. But then by assumption $\eta \notin X^*$, and so $w(\eta) \notin X^*$. On the other hand clearly $w(\rho^L) -\rho^L \in X^*$, hence this weight cannot lie in $\rho^L-\rho + X^*$, and we are done.
\end{proof}

\begin{lemma}
\label{combinatorialSt}
Let $\lambda \in X^*$. Then we have 
\[
\chi(\lambda+\rho^L-\rho) = \chi(\rho^L-\rho).\chi^L(\lambda).
\]
\end{lemma}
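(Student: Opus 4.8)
The plan is to expand both sides by the Weyl--Kac character formula and reduce the identity to the denominator formula for $^L\mathfrak g$. For the dual root datum $(X^*,Y^*,I)$ I would write $\mathfrak D^L = \prod_{\beta \in \Phi^*,\,\beta > 0}(1-e^{-\beta})^{n^L_\beta}$, where $n^L_\beta$ is the dimension of the $\beta$-root space of $^L\mathfrak g$, and $A^L_\nu = \sum_{w \in W}\varepsilon(w)e^{w(\nu+\rho^L)-\rho^L}$ for $\nu \in X^*$; exactly as for $(X,Y,I)$, all of $\mathfrak D^{-1}$, $(\mathfrak D^L)^{-1}$, $A_\nu$ and $A^L_\nu$ lie in $\mathcal E$, and the Weyl--Kac formula gives $\chi(\nu) = \mathfrak D^{-1}A_\nu$ and $\chi^L(\nu) = (\mathfrak D^L)^{-1}A^L_\nu$ (extended to the shifted Weyl orbit of a dominant weight with the sign $(-1)^{\ell(w)}$, and zero outside it).

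The one elementary ingredient I would record first is that for every $\nu \in X^*$
\[
A_{\nu + \rho^L - \rho} = e^{\rho^L - \rho}\, A^L_\nu ,
\]
which is immediate on matching the two alternating sums term by term, since $w(\nu+\rho^L) - \rho = (\rho^L - \rho) + \bigl(w(\nu+\rho^L)-\rho^L\bigr)$ for each $w \in W$. Applying this with $\nu = 0$ and using that the trivial $^L\mathfrak g$-module has character $1$, so that $(\mathfrak D^L)^{-1}A^L_0 = \chi^L(0) = 1$ and hence $A^L_0 = \mathfrak D^L$, gives $A_{\rho^L - \rho} = e^{\rho^L-\rho}\mathfrak D^L$, whence $\chi(\rho^L-\rho) = \mathfrak D^{-1}e^{\rho^L-\rho}\mathfrak D^L$. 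It would then remain to compute, inside $\mathcal E$,
\[
\chi(\rho^L-\rho)\cdot\chi^L(\lambda) = \bigl(\mathfrak D^{-1}e^{\rho^L-\rho}\mathfrak D^L\bigr)\cdot\bigl((\mathfrak D^L)^{-1}A^L_\lambda\bigr) = \mathfrak D^{-1}e^{\rho^L-\rho}A^L_\lambda = \mathfrak D^{-1}A_{\lambda+\rho^L-\rho} = \chi(\lambda+\rho^L-\rho),
\]
the displayed identity with $\nu = \lambda$ supplying the penultimate step. Since $\chi^L(\lambda) = (\mathfrak D^L)^{-1}A^L_\lambda$ and $\chi(\lambda+\rho^L-\rho) = \mathfrak D^{-1}A_{\lambda+\rho^L-\rho}$ hold for every $\lambda \in X^*$ (both sides vanishing when $\lambda+\rho^L$ is $W$-singular, and reducing to the dominant case by the alternating property otherwise), this would prove the lemma for all $\lambda \in X^*$.

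The only thing needing genuine care — the main obstacle, such as it is — is the bookkeeping about which products make sense in $\mathcal E$: $\mathfrak D$ and $\mathfrak D^L$ are not elements of $\mathcal E$, only their inverses are, so the cancellation $\mathfrak D^L\cdot(\mathfrak D^L)^{-1}=1$ used above has to be read as clearing denominators, i.e. as the statement that it suffices to establish $A_{\lambda+\rho^L-\rho} = e^{\rho^L-\rho}A^L_\lambda$ together with $A^L_0 = \mathfrak D^L$ (both done above), once one has checked that $e^{\rho^L-\rho}$, $A^L_\lambda$ and $(\mathfrak D^L)^{-1}$ all lie in $\mathcal E$ so that the products in the last display are defined there. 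Beyond that the argument is a purely formal manipulation of alternating sums, the only genuinely external input being the Weyl--Kac character formula — equivalently its denominator specialization — for the Langlands dual algebra $^L\mathfrak g$.
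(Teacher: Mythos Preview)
Your proof is correct and follows essentially the same route as the paper's: both expand everything via the Weyl--Kac character formula, identify $A_{\nu+\rho^L-\rho}=e^{\rho^L-\rho}A^L_\nu$ (in the paper this is the passage from $\sum_w\varepsilon(w)e^{w(\lambda+\rho^L)-\rho}$ to $e^{\rho^L-\rho}\sum_w\varepsilon(w)e^{w(\lambda+\rho^L)-\rho^L}$), and invoke the denominator formula for $(X^*,Y^*,I)$ to rewrite $\chi(\rho^L-\rho)$ as $e^{\rho^L-\rho}\mathfrak D^{-1}\mathfrak D^L$. Your explicit discussion of which factors lie in $\mathcal E$ and how the cancellation $\mathfrak D^L\cdot(\mathfrak D^L)^{-1}=1$ is to be read is a point the paper leaves implicit, but otherwise the arguments are the same.
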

\begin{proof}
Let $\Phi$ denote the set of roots for $(X,Y,I)$, and similarly let $\Phi^*$ denote the set of roots for $(X^*, Y^*,I)$. We have
\[
\chi(\lambda+\rho^L-\rho) = (\prod_{\alpha \in \Phi, \alpha >0} (1-e^{-\alpha})^{n_\alpha} \sum_{w \in W} \varepsilon(w)e^{w(\lambda+\rho^L)-\rho},
\]
while 
\[
\chi^L(\lambda) = \big(\prod_{\alpha^* \in \Phi^*, \alpha^* >0} (1-e^{-\alpha^*})^{-n_\alpha^*}\big) \sum_{w \in W} \varepsilon(w)e^{w(\lambda+\rho^L)-\rho^L},
\]
where $n_\alpha$ and $n_\alpha^*$ denote the dimensions of the roots spaces in $\mathfrak g$ and $^L\mathfrak g$ respectively. But now we have
\[
\begin{split}
\chi(\rho^L-\rho) &= (\prod_{\alpha >0} (1-e^{-\alpha})^{-n_{\alpha}}) \sum_{w \in W} \varepsilon(w)e^{w(\rho^L)-\rho} \\
& = (e^{-\rho+\rho^L}\prod_{\alpha >0} (1-e^{-\alpha})^{-n_{\alpha}})\sum_{w\in W}\varepsilon(w)e^{w(\rho^L)-\rho^L}.
\end{split}
\]
Applying Weyl's denominator formula for $(X^*,Y^*,I)$ to the sum in the last expression, we find that
\[
\chi(\rho^L-\rho) = (e^{\rho^L-\rho}\prod_{\alpha >0} (1-e^{-\alpha})^{-n_{\alpha}})\prod_{\alpha^*>0} (1-e^{-\alpha^*})^{n^*_{\alpha}}.
\]
The statement of the Lemma follows immediately.
\end{proof}

\begin{remark}
In the finite or affine case, Lemma \ref{scaling} shows that $\Phi$ and $\Phi^*$ are in bijection $\alpha \leftrightarrow \alpha^*$, so that $\alpha^* = l_\alpha \alpha$ for some $l_\alpha \in \mathbb Z$ positive. Since all root spaces are one-dimensional in the finite case, we get a simple expression for the Weyl character of $\rho^L-\rho$:
\[
\chi(\rho^L-\rho) = e^{\rho^L-\rho}\prod_{\alpha \in \Phi, \alpha >0}(1 + e^{-\alpha} + \ldots + e^{-(l_\alpha-1)\alpha}).
\]
In the affine case suppose the generalized Cartan matrix is of type $X_{m}^{(s)}$ in the classification given in \cite[Chapter 4]{Kac}. Then although the root spaces corresponding to real roots are again all one-dimensional, the root space of weight $j\delta$ has dimension $|I|-1$ if $s$ divides $j$ and dimension $(m-|I|+1)/(s-1)$ otherwise. The explicit formula for $\chi(\rho^L-\rho)$ is thus similar but contains a more elaborate product contribution coming from imaginary roots.
\end{remark}

\begin{example}
Let $\ell=2$, and $\Ud_\ell$ be the quantum group of type $B_2$, so that $\Ud^*_\ell$ is of type $C_2$. We take $\alpha_1$ to be the long root and $\alpha_2$ to be the short root, so that $\rho^L-\rho = \varpi_2$. Setting $\mu = \varpi_1 \in X^*$, for example. and writing $y_i = e^{\varpi_i}$, we have
\[
\chi(\rho^L-\rho +\mu) = \chi(\rho) = y_1y_2(1+y_1^{-2}y_2^{-2})(1+y_1y_2^{-2})(1+y_1^{-1})(1-y_2^{-2})
\]
(since we always have $\chi(\rho) = e^{\rho}\prod_{\alpha >0}(1+e^{-\alpha})$). Now the representation of highest weight $\varpi_2$ is $4$-dimensional with character $\chi(\varpi_2) = y_2(1+y_1y_2^{-2})(1+y_1^{-1})$, and so dually we have $\chi^L(\varpi_1) = y_1(1+y_1^{-2}y_2^{-2})(1+y_2^{-2})$. The product formula of the previous Lemma now follows immediately. 
\end{example}

For $\nu_1,\nu_2, \nu_3 \in X^+$ let $\{c_{\nu_1,\nu_2}^{\nu_3}\}$ be the structure constants for the multiplication on the Grothendieck group of the category $\Oint(\mathfrak g)$ given by tensor product. Since $K_0(\Oint(\mathfrak g))$ injects into $\mathcal E$ via the character map we have
\[
\chi(\nu_1)\chi(\nu_2) =\sum_{\nu_3\in X^+} c_{\nu_1,\nu_2}^{\nu_3} \chi(\nu_3).
\]

\begin{theorem}
\label{branchingrule}
The Langlands duality branching rules $m_\mu^\lambda$ are positive. More precisely, we have
\[
m_\mu^\lambda = c_{\rho^L-\rho,\lambda}^{\mu+\rho^L-\rho}.
\]
\end{theorem}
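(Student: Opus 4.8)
The plan is to compute the product $\chi(\lambda)\cdot\chi(\rho^L-\rho)$ in two ways and compare coefficients of irreducible Weyl characters (recall $\lambda\in X^+\cap X^*$). Write $\nu_0=\rho^L-\rho$; then $\langle\check{\alpha}_i,\nu_0\rangle=l_i-1\ge 0$, so $\nu_0\in X^+$, and $\langle\check{\alpha}_i,\rho^L\rangle=l_i\in l_i\mathbb Z$, so $\rho^L\in X^*$. On one side, $\lambda,\nu_0\in X^+$, so by definition of the tensor structure constants $\chi(\lambda)\chi(\nu_0)=\sum_{\nu\in X^+}c_{\nu_0,\lambda}^{\nu}\chi(\nu)$. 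On the other side, decompose $\chi(\lambda)=\Pi_\ell(\chi(\lambda))+(1-\Pi_\ell)(\chi(\lambda))$ and multiply each summand by $\chi(\nu_0)$.

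First I would treat $(1-\Pi_\ell)(\chi(\lambda))\cdot\chi(\nu_0)$. Since $\chi(\lambda)\in\mathcal E^W$ and $\Pi_\ell$ is $W$-equivariant, $(1-\Pi_\ell)(\chi(\lambda))$ lies in $\mathcal E^W$ with support in $X\setminus X^*$, so Lemma \ref{combinatorialavoidance} applies directly: $(1-\Pi_\ell)(\chi(\lambda))\chi(\nu_0)$ is a $\mathbb Z$-combination of Weyl characters $\chi(\mu')$ with $\mu'\in X^+$ and $\mu'\notin\nu_0+X^*$. Next I would treat $\Pi_\ell(\chi(\lambda))\cdot\chi(\nu_0)$, which is $W$-invariant with support in $X^*$: by the first part of Lemma \ref{combinatorialavoidance} it equals $\sum_{\eta\in X^*}(\dim\nabla(\lambda)_\eta)\,\chi(\eta+\nu_0)$, and rewriting each $\chi(\eta+\nu_0)=\mathfrak D^{-1}A_{\eta+\nu_0}$ via the $\rho$-shifted $W$-action and using that $\eta$ and $\rho^L$ lie in the $W$-stable lattice $X^*$ (Remark \ref{Weylremark}), every nonzero $\chi(\eta+\nu_0)$ equals $\pm\chi(\nu)$ for a dominant $\nu\in\nu_0+X^*$. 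Since $\{\chi(\nu):\nu\in X^+\}$ is linearly independent and $X^+=(X^+\cap(\nu_0+X^*))\sqcup(X^+\setminus(\nu_0+X^*))$, matching the two computations of $\chi(\lambda)\chi(\nu_0)$ gives
\[
\Pi_\ell(\chi(\lambda))\cdot\chi(\nu_0)=\sum_{\nu\in X^+\cap(\nu_0+X^*)}c_{\nu_0,\lambda}^{\nu}\,\chi(\nu).
\]

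The crucial elementary point, which I would isolate, is that $X^+\cap(\nu_0+X^*)=\nu_0+(X^+\cap X^*)$: if $\nu=\nu_0+\mu$ with $\mu\in X^*$ and $\nu\in X^+$, then $\langle\check{\alpha}_i,\mu\rangle\in l_i\mathbb Z$ and $\langle\check{\alpha}_i,\mu\rangle=\langle\check{\alpha}_i,\nu\rangle-(l_i-1)>-l_i$, which forces $\langle\check{\alpha}_i,\mu\rangle\ge 0$, so $\mu\in X^+$. Combined with Lemma \ref{combinatorialSt}, which gives $\chi(\mu+\nu_0)=\chi(\nu_0)\chi^L(\mu)$ for $\mu\in X^*$, the displayed identity becomes $\chi(\nu_0)\,\Pi_\ell(\chi(\lambda))=\chi(\nu_0)\sum_{\mu\in X^+\cap X^*}c_{\nu_0,\lambda}^{\mu+\nu_0}\chi^L(\mu)$. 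Because $\mathcal E$ is an integral domain (translating supports into the cone $-\sum_i\mathbb N\alpha_i$ embeds them, by $X$-regularity, into a power series ring in the $e^{-\alpha_i}$) and $\chi(\nu_0)\ne 0$, I may cancel $\chi(\nu_0)$ to obtain
\[
\Pi_\ell(\chi(\lambda))=\sum_{\mu\in X^+\cap X^*}c_{\nu_0,\lambda}^{\mu+\nu_0}\,\chi^L(\mu).
\]
This exhibits $\Pi_\ell(\chi(\lambda))$ as a nonnegative-integer combination of irreducible $^L\mathfrak g$-characters (hence a genuine character, for an arbitrary symmetrizable $C$); since $c_{\nu_0,\lambda}^{\mu+\nu_0}=0$ unless $\mu\le\lambda$ and $c_{\nu_0,\lambda}^{\lambda+\nu_0}=1$, this is exactly the expansion defining the branching multiplicities, and comparing coefficients via linear independence of the $\chi^L(\mu)$ yields $m_\mu^\lambda=c_{\rho^L-\rho,\lambda}^{\mu+\rho^L-\rho}$.

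I expect the difficulty to be bookkeeping rather than a single hard computation, concentrated in two places. The first is the passage from $\chi(\eta+\nu_0)$ with $\eta\in X^*$ arbitrary to a dominant Weyl character, ensuring no cancellation mixes the coset $\nu_0+X^*$ with its complement; this is the ``mirror image'' of Lemma \ref{combinatorialavoidance} and relies on the $W$-stability of $X^*$. The second, and the conceptual heart, is the lattice identity $X^+\cap(\nu_0+X^*)=\nu_0+(X^+\cap X^*)$: it is precisely what forces the Langlands branching multiplicity to be a single tensor product multiplicity $c_{\rho^L-\rho,\lambda}^{\mu+\rho^L-\rho}$ rather than an alternating sum of several.
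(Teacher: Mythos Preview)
Your argument is correct and follows essentially the same strategy as the paper: split $\chi(\lambda)$ into its $X^*$-part and its complement, multiply by $\chi(\rho^L-\rho)$, apply Lemma~\ref{combinatorialavoidance} to the complement and Lemma~\ref{combinatorialSt} to the $X^*$-part, and compare with the tensor-product expansion. The only organizational difference is the direction in which Lemma~\ref{combinatorialSt} is used. The paper begins from the defining expansion $\Pi_\ell(\chi(\lambda))=\sum_\mu m_\mu^\lambda\,\chi^L(\mu)$ and applies Lemma~\ref{combinatorialSt} forward to obtain $\chi(\rho^L-\rho)\,\Pi_\ell(\chi(\lambda))=\sum_\mu m_\mu^\lambda\,\chi(\mu+\rho^L-\rho)$ directly, so that the coset comparison immediately gives $m_\mu^\lambda=c_{\rho^L-\rho,\lambda}^{\mu+\rho^L-\rho}$. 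Your route instead isolates the $(\nu_0+X^*)$-coset inside the tensor product and then runs Lemma~\ref{combinatorialSt} backwards, which forces you to prove the lattice identity $X^+\cap(\nu_0+X^*)=\nu_0+(X^+\cap X^*)$ and to cancel $\chi(\nu_0)$. These extra steps are harmless here (all the characters involved have support in a single translate of the cone $-\sum_i\mathbb N\alpha_i$, so the power-series cancellation is legitimate, though the blanket assertion that $\mathcal E$ is an integral domain would need more care in general), but the paper's ordering sidesteps them entirely.
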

\begin{proof}
Suppose we have $\Pi_\ell(\chi(\lambda)) = \sum_{\mu \in X^+\cap X^*, \mu \leq \lambda} m_{\mu}^\lambda \chi^L(\mu)$. Then it follows from Lemma \ref{combinatorialSt} that 
\begin{equation}
\label{Pipart}
\chi(\rho^L-\rho)\Pi_\ell(\chi(\lambda)) = \sum_{\mu \in X^+\cap X^*, \mu \leq \lambda} m_\mu^\lambda \chi(\mu+ \rho^L-\rho).
\end{equation}
On the other hand, since $\chi(\lambda)$ is $W$-invariant, and $\Pi_\ell$ is $W$-equivariant, we may apply Lemma \ref{combinatorialavoidance} with $\xi = (\chi(\lambda) - \Pi_\ell(\chi(\lambda))$ to obtain 
\begin{equation}
\label{notPipart}
\chi(\rho^L-\rho).(\chi(\lambda) - \Pi_\ell(\chi(\lambda))) = \sum_{\substack{\nu \in X^+ \\ \nu \notin \rho^L-\rho +X^*}} n_\nu^\lambda \chi(\nu).
\end{equation}
for some integers $n_\nu^\lambda \in \mathbb Z$. 
Finally, since we have 
\[
\chi(\rho^L-\rho).\chi(\lambda) = \sum_{\eta \in X^+} c_{\rho^L-\rho,\lambda}^\eta \chi(\eta),
\]
and the Weyl characters which can occur on the right-hand sides of equations (\ref{Pipart}) and (\ref{notPipart}) have highest weight which lie in different $X^*$-cosets, the claim immediately follows. 
\end{proof}

\begin{remark}
Note that the above argument shows that $\chi(\rho^L-\rho).(\chi(\lambda) - \Pi(\chi(\lambda))$ is indeed a positive sum of Weyl characters, in contrast to the general situation of Lemma \ref{combinatorialavoidance}. We will see in the next section that, at least in the finite-type case, it is the character of a direct summand of the $\Ud_\ell$-module $\nabla(\rho^L-\rho)\otimes \nabla(\lambda)$.

\end{remark}

Tensor product multiplicities have been computed combinatorially by various people: for finite type, building on a combinatorial description due to Lusztig, Berenstein and Zelevinsky have give ``polyhedral'' multiplicity formulas in \cite{BZ}; for a general Kac-Moody algebra, there is a Littlewood-Richardson rule in terms of Littelmann paths \cite{Li95}. 

\begin{example}
Consider again the case of $B_2$. We need to calculate the multiplicities in the tensor products $\nabla(\lambda)\otimes \nabla(\varpi_2)$ for $\lambda \in X^*\cap X^+$ (where $\alpha_2$ is the short root and $\alpha_1$ the long root). As we have seen above, the set of weights of $\nabla(\varpi_2)$ is the Weyl group orbit of $\varpi_2$ and each weight has multiplicity one. Let $W_{\varpi_2}$ be the stabilizer of $\varpi_2$ in $W$. Using the formula in the statement of Lemma \ref{combinatorialavoidance}, it follows that for $w \in W/W_{\varpi_2}$, the simple highest weight representation $\nabla(\lambda+w(\varpi_2))$ occurs exactly once in the tensor product, provided $\lambda+w(\varpi_2)$ is dominant (since it is easy to check that if $\lambda +w(\varpi_2)$ is not dominant, then $\lambda + w(\varpi_2) + \rho$ is not regular) and these are all the constituents. Hence we have
\[
\Pi_2(\chi(\lambda)) = \sum_{\substack{w \in W/W_{\varpi_2}\\ \lambda + w(\varpi_2) \in X^+}} \chi^L(\lambda + w(\varpi_2) - \varpi_2).
\]
Note that since $\lambda \in X^*\cap X^+$, the weight $\lambda + w(\varpi_2)$ is dominant if and only if $\lambda+w(\varpi_2)-\varpi_2$ is dominant. This recovers the calculations of \cite[Remark 6.9]{FH}.
\end{example}

\section{On Langlands duality branching rules in the finite-type case and tilting modules.}
\label{branchingtilting}

In this section we study the branching multiplicities $m_\mu^\lambda$ from a representation-theoretic point of view, and give an interpretation of the results of the last section using tilting modules. We restrict ourselves to the case of finite type algebras, as we will use the machinery of induction \textit{etc}. for quantum algebras at roots of unity provided by \cite{APW},\cite{Ka}, and the infinitesimal quantum groups defined by Lusztig. We begin by recalling the results on quantum groups at a root of unity that we will need. Since we work here with only finite type quantum groups we may work with the category $\mathcal F$ of finite dimensional representations (of type $1$).

In \cite{L90}, Lusztig defined root vectors $\theta_\alpha$ for each positive root $\alpha$, via a braid group action. For a positive root $\alpha$, let $\ell_\alpha = \ell_i$, where $\alpha$ is conjugate to the simple root $\alpha_i$ under the action of $W$ the Weyl group. 

\begin{prop}
Let $\mathfrak f$ be the subalgebra of $\mathbf f_\ell = \mathcal A_\ell \otimes_\mathcal A \mathbf f$ generated by $\{\theta_\alpha: \ell_\alpha \geq 2\}$. Then $\mathfrak f$ is a finite dimensional algebra and we have an isomorphism
\[
\mathbf f_\ell^* \otimes \mathfrak f \to \mathbf f_\ell,
\]
given by $(x,y) \mapsto Fr'(x).y$. 
\end{prop}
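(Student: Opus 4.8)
The plan is to establish this as a "triangular decomposition" type statement for the positive part $\mathbf f_\ell$ relative to its Frobenius subalgebra. First I would recall the structure of $\mathbf f_\ell$ at the root of unity as a $\mathbb Z[I]$-graded algebra, and bring in Lusztig's PBW-type basis constructed from the root vectors $\theta_\alpha$ (built via the braid group action of \cite{L90}). Fixing a reduced expression for the longest element $w_0$ of $W$, one obtains a convex ordering $\beta_1,\dots,\beta_N$ of the positive roots and the corresponding ordered monomials $\theta_{\beta_1}^{(a_1)}\cdots\theta_{\beta_N}^{(a_N)}$ form an $\mathcal A_\ell$-basis of $\mathbf f_\ell$. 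At a root of unity, the key dichotomy is that for each positive root $\beta$, the divided power $\theta_\beta^{(n)}$ for $n<\ell_\beta$ spans a finite-dimensional piece (with $\theta_\beta^{\ell_\beta}$ "central-like" — in fact $\theta_\beta^{(\ell_\beta)}$ lies in the image of $Fr'$, up to identifying it with $\vartheta$-monomials), whereas the large divided powers $\theta_\beta^{(m\ell_\beta)}$ generate, via $Fr'$, a copy of $\mathbf f_\ell^*$.

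Concretely, the steps I would carry out: (1) Show $\mathfrak f$, the subalgebra generated by $\{\theta_\alpha : \ell_\alpha \geq 2\}$, is finite-dimensional, by checking that a spanning set is given by ordered monomials $\prod_\beta \theta_\beta^{(a_\beta)}$ with $0 \le a_\beta < \ell_\beta$ (only the roots with $\ell_\beta\ge 2$ contribute a nontrivial factor), which is a finite set; this uses the straightening/commutation relations among the $\theta_\beta$ at a root of unity from Lusztig's work, which keep one inside the span of such restricted monomials. (2) Recall that $Fr'\colon \mathbf f_\ell^* \to \mathbf f_\ell$ sends $\vartheta_i^{(n)} \mapsto \theta_i^{(nl_i)}$ and, more to the point, sends a PBW monomial for $\mathbf f_\ell^*$ (in root vectors $\vartheta_{\beta^*}$) to the corresponding large-divided-power PBW monomial $\prod_\beta \theta_\beta^{(a_\beta \ell_\beta)}$ in $\mathbf f_\ell$ — this is where one invokes compatibility of the Frobenius map with the braid group action / root vectors, which is in \cite[Ch.\ 35, 41]{L93}. (3) Define the map $\mathbf f_\ell^* \otimes \mathfrak f \to \mathbf f_\ell$ by $(x,y)\mapsto Fr'(x)\cdot y$; on PBW bases it sends $\vartheta\text{-monomial} \otimes \theta\text{-monomial}$ to the product of a large-divided-power monomial and a small-divided-power monomial. (4) Conclude bijectivity by the division-with-remainder decomposition $n = q\ell_\beta + r$ with $0\le r<\ell_\beta$ applied in each root coordinate: this identifies, after straightening, the product $Fr'(x)y$ with a full PBW monomial of $\mathbf f_\ell$, and matching of the two bases gives that the map is an isomorphism of $\mathcal A_\ell$-modules (it is an algebra map on each factor but the statement is only of the underlying module/"multiplication" structure, so I would be careful to phrase it as an isomorphism of vector spaces, or of left $\mathbf f_\ell^*$- and right $\mathfrak f$-modules).

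The main obstacle is step (4): passing from "the product $Fr'(x)\cdot y$ of an ordered large-power monomial times an ordered small-power monomial" to "an honest ordered PBW monomial for $\mathbf f_\ell$" requires controlling the commutation relations between high and low divided powers of root vectors at a root of unity. A priori $\theta_\beta^{(a\ell_\beta)}\theta_\gamma^{(r)}$ need not already be ordered, and reordering produces lower-order correction terms. One must check that these corrections stay within the span of the monomials of the claimed form and that the resulting transition matrix between $\{Fr'(\text{PBW}^*)\cdot \text{PBW}_{\mathfrak f}\}$ and the PBW basis of $\mathbf f_\ell$ is unitriangular (with respect to a suitable ordering on exponent tuples refining the root order), hence invertible over $\mathcal A_\ell$. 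This triangularity is essentially the content of the quantum Frobenius at a root of unity being "split" on the level of the positive part, and the cleanest route is to cite the relevant filtration/PBW statements in \cite{L93} and \cite{L90} and to reduce the general rank case to rank-one computations for each pair $(\beta,\gamma)$ via Levi subalgebras. Once the unitriangularity is in hand, the isomorphism follows by a dimension/rank count in each graded piece of $\mathbf f_\ell$.
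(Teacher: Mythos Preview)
The paper does not actually prove this proposition: its entire proof is a citation to \cite[Chapter 35]{L93} for the generic case and to \cite[2.7]{Ka} for the small values of $\ell$ excluded there. Your proposal, by contrast, sketches the underlying argument itself --- the PBW basis coming from a reduced expression of $w_0$, the division-with-remainder splitting $n = q\ell_\beta + r$ in each root coordinate, and unitriangularity of the straightening between the ``mixed'' monomials $Fr'(x)\cdot y$ and the honest PBW basis of $\mathbf f_\ell$. This outline is correct and is essentially what Lusztig does in \cite[Chapter 35]{L93}; your acknowledged ``main obstacle'' (controlling the reordering of high and low divided powers) is exactly the technical core that those references establish. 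So you are not taking a different route from the paper so much as unpacking what the paper's citation contains. If you were to write this up, you would still end up invoking \cite{L93} and \cite{Ka} at the crucial step, which is all the paper does.

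One small point worth flagging: your spanning set for $\mathfrak f$ uses divided powers $\theta_\beta^{(a_\beta)}$ with $0\le a_\beta<\ell_\beta$, whereas $\mathfrak f$ is defined as generated by the undivided $\theta_\alpha$. You should make explicit the (easy) fact that $[n]_\beta!$ is a unit in $\mathcal A_\ell$ for $n<\ell_\beta$, so that $\theta_\beta^{(n)}$ lies in the subalgebra generated by $\theta_\beta$ in that range; otherwise the identification of $\mathfrak f$ with the ``small'' part of the PBW basis is not immediate.
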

\begin{proof}
This is established in most cases in \cite[35]{L93}, except when $\ell$ is small. The excluded cases (which are already stated in \cite{L93} but without detailed proof) have been checked in \cite[2.7]{Ka}.
\end{proof}

\begin{definition}
We need various subalgebras of $\Ud_\ell$ and $\Ud_\ell^*$. Define algebras $\dot{\mathfrak u}$ and $\hat{\mathfrak u}$ by setting $\dot{\mathfrak u} = \{x^+1_\lambda y^-: x, y \in \mathfrak f, \lambda \in X\}$ and $\hat{\mathfrak u} = \{x1_\lambda: x \in \dot{\mathfrak u} \text{ or } x \in U^-\}$, (note that these are indeed subalgebras). Finally, we need the subalgebra $\Ud^{\leq 0}_\ell = \{x^-1_\lambda: x \in \mathbf f\}$. 
\end{definition}

In \cite{APW92} the authors define (derived) induction functors on integrable modules for the algebras $\dot{\mathfrak u}, \hat{\mathfrak u}, \Ud_\ell^{\leq 0}$ and $\Ud_\ell$ denoted $H^i(U_1/U_2, -)$ where $U_1 \supset U_2$ are two of the algebras above\footnote{In fact they work with ``unmodified'' algebras, but the categories of modules obviously correspond to categories of modules for the modified algebras -- see \cite{Ka} for more details.}. Given $\lambda \in X$, there is a natural one-dimensional module for $\Ud^{\leq 0}$ which we denote simply by $\lambda$. When $\lambda$ is dominant, $H^0(\Ud/\Ud^{\leq}, \lambda)$, just as in the classical theory of induction from a Borel subgroup, is an indecomposable module with character given by Weyl's formula, which we denote by $\nabla(\lambda)$. It has a unique simple submodule $L(\lambda)$. The dual of $\nabla(\lambda)$, denoted $\Delta(\lambda)$, is a costandard, or Weyl, module. The same theory exists for the algebra $\Ud^*_\ell$, though here of course the theory is easier because the category of $\Ud^*_\ell$-modules is semisimple. We will write $\nabla^*(\lambda)$, $\Delta^*(\lambda)$ for the corresponding modules for $\Ud^*_\ell$.

We define $\sigma_\ell = \sum_{i \in I} (\ell_i - 1)\varpi_i$, and let $St_\ell$, the Steinberg representation, be the module $\nabla(\sigma_\ell)$. The functor $H^0(\hat{\mathfrak u}/\Ud^{\leq 0}, -)$ is exact (see \cite[2.9]{Ka}), and we denote it by $\hat{Z}$. It is known that $St_\ell \cong \hat{Z}(\sigma_\ell)$ as $\hat{\mathfrak u}$-modules, and in fact $St_\ell$ is simple as both a $\Ud_\ell$ and  $\hat{\mathfrak u}$-module, see for example \cite[\S 0.9]{APW92} for more details.

We will also need the class of modules known as tilting modules, whose definition we now recall.
\begin{definition}
A $\Ud_\ell$ module is said to be \textit{tilting} if it has a filtration both by standard and costandard modules.
\end{definition}
We now review some of the basic results on tilting modules. Although all the results are standard, we sketch the proof to point out that they all hold even for small values of $\ell$.

\begin{theorem}
\label{tiltingthm}
\begin{enumerate}
\item
If $M_1,M_2$ are tilting modules, then so is $M_1\otimes M_2$.
\item If $M$ and $N$ are tilting modules, then $M \cong N$ if and only if $M$ and $N$ have the same character.
\end{enumerate}
\end{theorem}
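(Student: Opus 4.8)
The plan is to prove both parts by reducing to the classical tilting module theory à la Ringel and Donkin, while being careful that the arguments do not use any restriction on $\ell$. Throughout I will use the quantum analogue of the key vanishing statement: for a costandard module $\nabla(\mu)$ and a standard module $\Delta(\lambda)$ one has $\on{Ext}^i(\Delta(\lambda), \nabla(\mu)) = 0$ for $i > 0$, and $\on{Hom}(\Delta(\lambda),\nabla(\mu))$ is one-dimensional if $\lambda = \mu$ and zero otherwise. This is available from the cohomology theory of \cite{APW92} together with \cite{Ka} precisely in the form that covers small $\ell$, which is why I quote those references rather than a generic source. From this it follows in the usual way that a module $M$ has a standard (resp.\ costandard) filtration if and only if $\on{Ext}^1(M,\nabla(\mu)) = 0$ for all $\mu$ (resp.\ $\on{Ext}^1(\Delta(\lambda),M) = 0$ for all $\lambda$), and that the multiplicities $(M : \Delta(\lambda))$ in a standard filtration are well-defined, being equal to $\dim\on{Hom}(M,\nabla(\lambda))$.

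For part (1), I would argue as follows. Since $M_1 \otimes M_2$ is self-dual up to the contravariant duality that interchanges $\Delta$ and $\nabla$ (the duality fixes simples and sends $\Delta(\lambda)\mapsto \nabla(\lambda)$, and tensor product of a $\Delta$-filtered module with a $\nabla$-filtered module is compatible with it), it suffices to show that $M_1\otimes M_2$ has a standard filtration; the costandard filtration then follows by applying the duality. To get the standard filtration it is enough, by the Ext-criterion above, to reduce to the case $M_1 = \Delta(\lambda)$, $M_2 = \Delta(\mu)$ — a general tilting module is an iterated extension of standards, and having a $\Delta$-filtration is closed under extensions. For $\Delta(\lambda)\otimes\Delta(\mu)$ one uses the standard tensor identity / Donkin's argument: filter $\Delta(\mu)$ by one-dimensional $\Ud^{\leq 0}_\ell$-weight modules (its weights, with multiplicity), apply the exact induction-type functor, and use that $\Delta(\lambda)\otimes \nu \cong H^0(\Ud/\Ud^{\leq 0}, \Delta(\lambda)\otimes\nu)$-type identities together with the fact that $\Delta(\lambda)$ twisted by a weight, induced up, is again standard-filtered. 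This produces the $\Delta$-filtration of the tensor product. The essential point for us is that every ingredient — the induction functors, their exactness on the relevant subcategories, and the tensor identity — is set up in \cite{APW92} and \cite{Ka} without any hypothesis on the size of $\ell$.

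For part (2), one direction is trivial: isomorphic modules have equal characters. For the converse, suppose $M$ and $N$ are tilting with $\chi(M) = \chi(N)$. The character determines the standard-filtration multiplicities $(M:\Delta(\lambda)) = (N:\Delta(\lambda))$ for all $\lambda$, since the $\chi(\Delta(\lambda))$ are linearly independent in $\mathcal E$ (they are the Weyl characters). Now decompose $M = \bigoplus M_i$ and $N = \bigoplus N_j$ into indecomposable tilting summands; by Ringel's classification each indecomposable tilting module $T(\lambda)$ has a unique highest weight $\lambda$, occurring with multiplicity one, and $(T(\lambda):\Delta(\mu)) \neq 0$ only if $\mu \leq \lambda$. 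An induction on the partial order on the (finite) set of weights appearing then forces the multiset of highest weights of the summands of $M$ to agree with that of $N$, hence $M \cong N$. The one input I need here that goes slightly beyond formal nonsense is the existence and uniqueness of the indecomposable tilting modules $T(\lambda)$ for each dominant $\lambda$; this is again a Ringel-type construction built from the Ext-vanishing, and I would indicate that it goes through verbatim for all $\ell$.

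The main obstacle, as flagged in the remark preceding the theorem, is not the algebra but the bookkeeping about small $\ell$: the standard references sometimes impose $\ell$ large (e.g.\ larger than the Coxeter number) for the Ext-vanishing between standard and costandard modules, and one must check that with the cohomology machinery of \cite{APW92} normalized as in \cite{Ka} these restrictions are genuinely unnecessary. Concretely, the delicate step is verifying that $\on{Ext}^1(\Delta(\lambda),\nabla(\mu)) = 0$ and that the induction functor $H^0(\Ud_\ell/\Ud^{\leq 0}_\ell, -)$ has the expected exactness on weight-filtered modules, independently of $\ell$; everything else — closure of $\Delta$-filtered modules under extensions and tensoring, the duality argument, and the character-determines-the-module argument — is then a formal consequence. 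I would therefore spend most of the written proof pinning down these $\ell$-uniform statements and only sketch the classical deductions.
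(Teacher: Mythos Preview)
Your outline is sound, and for part (2) it matches the paper: both invoke the Ringel-type classification of indecomposable tilting modules $T(\lambda)$, one for each $\lambda \in X^+$, with $\lambda$ appearing as the unique maximal weight with multiplicity one, and then deduce that the character determines the tilting module up to isomorphism.

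For part (1), though, the paper takes a genuinely different route. You propose the Donkin-style induction/tensor-identity argument and, rightly, identify as the main obstacle the task of verifying uniformly in $\ell$ that the relevant induction functors behave well enough to push a weight filtration of $\Delta(\mu)$ to a $\Delta$-filtration of $\Delta(\lambda)\otimes\Delta(\mu)$. (Incidentally, your sketch here is a little garbled: $H^0(\Ud_\ell/\Ud_\ell^{\leq 0},-)$ is only left exact, so ``apply the exact induction-type functor'' to a weight filtration does not literally work; the actual Donkin--Wang argument is more delicate and needs cohomology vanishing at each step.) The paper bypasses all of this by appealing instead to Lusztig's theory of \emph{based modules}: the tensor product of two standard modules over $\mathcal A = \mathbb Z[v,v^{-1}]$ already carries a canonical basis compatible with a standard filtration, so the $\Delta$-filtration exists integrally (see \cite{Ka98}), and specializing to $\mathcal A_\ell$ gives the result for every $\ell$ simultaneously. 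This is shorter and eliminates the small-$\ell$ bookkeeping entirely, whereas your approach would require pinning down the uniform vanishing statements you flag in your final paragraph.
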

\begin{proof}
The key to $(1)$ is to show that the tensor product of two standard modules has a filtration by standard modules. This follows even integrally from Lusztig's theory of based modules: see for example \cite{Ka98}. The general construction of tilting modules shows that for each $\lambda \in X^+$ there is a unique indecomposable tilting module $T(\lambda)$, where $\lambda$ occurs as a weight of $T(\lambda)$ with multiplicity one and all other weights of $T(\lambda)$ are strictly less than $\lambda$ . Moreover every indecomposable tilting module has this form. See \cite[\S 2]{A92} for more details. This readily implies $(2)$. 
\end{proof}

We also need to understand some relations between pulling back via the Frobenius, and induction. The main result of \cite{Ka} asserts that for $M$ a $\Ud_\ell^{*\leq 0}$-module there is an isomorphism
\begin{equation}
\label{isomorphism}
H^i(\Ud/\hat{\mathfrak u}, M^{Fr}) \cong H^i(\Ud^*_\ell/\Ud_\ell^{*\leq 0}, M)^{Fr}, \quad (i \geq 0).
\end{equation}
where $M^{Fr}$ is the $\hat{\mathfrak u}$-module obtained via composition with $Fr$, and similarly for the right-hand side. (This result is already established in \cite{APW92} with some restrictions). 

\begin{theorem}
Let $\lambda \in X^+\cap X^*$, then we have
\begin{equation}
\label{SteinbergRel}
H^i(\Ud_\ell/\Ud_\ell^{\leq 0}, \lambda + \sigma_\ell) \cong St_\ell \otimes H^i(\Ud^*_\ell/U^{*\leq 0}_\ell, \lambda)^{Fr}.
\end{equation}
\end{theorem}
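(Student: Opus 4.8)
The plan is to derive the isomorphism (\ref{SteinbergRel}) from the tensor identity (\ref{isomorphism}) together with the classical Steinberg tensor-type decomposition for induction. First I would observe that the one-dimensional $\Ud_\ell^{\leq 0}$-module $\lambda + \sigma_\ell$ factors, as a $\Ud_\ell^{\leq 0}$-module, as $\sigma_\ell \otimes \lambda^{Fr}$: indeed $\lambda \in X^*$ means $\lambda$ pulls back via $Fr$ to give a character of $\Ud_\ell^{\leq 0}$, and on the torus the characters simply add, while the nilpotent part acts by zero on both factors. So $H^i(\Ud_\ell/\Ud_\ell^{\leq 0}, \lambda + \sigma_\ell) \cong H^i(\Ud_\ell/\Ud_\ell^{\leq 0}, \sigma_\ell \otimes \lambda^{Fr})$.

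Next I would use the transitivity of induction through the intermediate subalgebra $\hat{\mathfrak u}$, i.e. a Grothendieck-type spectral sequence (or, since the relevant lower-step functor is exact, a genuine isomorphism) relating $H^i(\Ud_\ell/\Ud_\ell^{\leq 0}, -)$ to $H^i(\Ud_\ell/\hat{\mathfrak u}, H^j(\hat{\mathfrak u}/\Ud_\ell^{\leq 0}, -))$. The key input is that $\hat Z = H^0(\hat{\mathfrak u}/\Ud_\ell^{\leq 0}, -)$ is exact and that $\hat Z(\sigma_\ell) \cong St_\ell$ as $\hat{\mathfrak u}$-modules, so the intermediate induction of $\sigma_\ell$ collapses to a single module $St_\ell$ in degree zero. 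Then, by the tensor identity (projection formula) for the $\hat{\mathfrak u}$-module $St_\ell$ and the $Fr$-pulled-back module $\lambda^{Fr}$ — using that $St_\ell$ restricted to $\hat{\mathfrak u}$ remains the induced module $\hat Z(\sigma_\ell)$ and that tensoring with a $Fr$-inflated module commutes with $H^i(\Ud_\ell/\hat{\mathfrak u}, -)$ up to the $Fr$-twist — I would get $H^i(\Ud_\ell/\hat{\mathfrak u}, St_\ell \otimes \lambda^{Fr}) \cong St_\ell \otimes H^i(\Ud_\ell/\hat{\mathfrak u}, (\ol\lambda)^{Fr})$ where $\ol\lambda$ is $\lambda$ viewed as a $\Ud_\ell^{*\leq 0}$-module. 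Finally, applying (\ref{isomorphism}) with $M = \ol\lambda$ identifies $H^i(\Ud_\ell/\hat{\mathfrak u}, \ol\lambda^{Fr})$ with $H^i(\Ud^*_\ell/\Ud_\ell^{*\leq 0}, \lambda)^{Fr}$, which yields (\ref{SteinbergRel}).

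The main obstacle I anticipate is the bookkeeping in the projection/tensor-identity step: one must check carefully that $St_\ell$ is \emph{exactly} the induced module $\hat Z(\sigma_\ell)$ as an $\hat{\mathfrak u}$-module (not merely abstractly isomorphic to $\nabla(\sigma_\ell)$ restricted), so that the tensor identity $H^i(\Ud_\ell/\hat{\mathfrak u}, \hat Z(\sigma_\ell)\otimes N^{Fr}) \cong \hat Z(\sigma_\ell) \otimes H^i(\Ud_\ell/\hat{\mathfrak u},N^{Fr})$ applies; and one must ensure that no higher $\hat{\mathfrak u}$-induction terms $H^{>0}(\hat{\mathfrak u}/\Ud_\ell^{\leq 0},\sigma_\ell\otimes\lambda^{Fr})$ survive. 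Both follow from the exactness of $\hat Z$ recorded in \cite[2.9]{Ka} and the cited simplicity/induction facts for $St_\ell$ in \cite[\S 0.9]{APW92}, so I would invoke those; the remaining verifications are compatibility checks of the functors involved and are routine given the results already cited. Degree-wise everything is an honest isomorphism for each $i \geq 0$ because the intermediate functor is exact, so no spectral sequence degeneration argument is actually needed.
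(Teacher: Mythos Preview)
Your proposal is correct and is essentially the paper's own proof, run in the opposite direction: the paper starts from $St_\ell \otimes H^i(\Ud^*_\ell/\Ud_\ell^{*\leq 0},\lambda)^{Fr}$, applies (\ref{isomorphism}), then the $\Ud_\ell$-tensor identity, then $St_\ell \cong \hat Z(\sigma_\ell)$, then the $\hat{\mathfrak u}$-tensor identity, and finally collapses the composite induction using exactness of $\hat Z$; you traverse the same chain of isomorphisms starting from $H^i(\Ud_\ell/\Ud_\ell^{\leq 0},\lambda+\sigma_\ell)$. The ingredients and the obstacles you flag (the identification $St_\ell \cong \hat Z(\sigma_\ell)$ as $\hat{\mathfrak u}$-modules, exactness of $\hat Z$) are exactly those the paper invokes.
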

\begin{proof}
With the ingredients provided by \cite{Ka}, the proof is standard. By (\ref{isomorphism}) we have
\[
H^i(\Ud^*_\ell/U^{*\leq 0}_\ell, \lambda)^{Fr} \cong H^i(\Ud/\hat{\mathfrak u}, \lambda^{Fr})
\]
Thus tensoring both sides with $St_\ell$ we find the right-hand side of (\ref{SteinbergRel}) is isomorphic to 
\[
\begin{split}
H^i(\Ud_\ell/\hat{\mathfrak u}, \lambda^{Fr})\otimes St_\ell &\cong H^i(\Ud_\ell/\hat{\mathfrak u}, \lambda^{Fr}\otimes St_\ell) \\
& \cong H^i(\Ud_\ell/\hat{\mathfrak u}, \lambda^{Fr}\otimes \hat{Z}(\sigma_\ell)) \\
& \cong  H^i(\Ud_\ell/\hat{\mathfrak u},  \hat{Z}( \lambda + \sigma_\ell))\\
& \cong H^i(\Ud_\ell/\Ud^{\leq 0}_\ell, \lambda + \sigma_\ell),
\end{split}
\]
where in the first line we use the tensor identity for $\Ud_\ell$-modules, in the second the isomorphism $St_\ell \cong \hat{Z}(\sigma_\ell)$, in the third the tensor identity for $\hat{\mathfrak u}$-modules, and in the final line, the fact that $\hat{Z}$ is exact, so the spectral sequence for the composition of induction functors degenerates to give an isomorphism $H^i(\Ud_\ell/\hat{\mathfrak u}, \hat{Z}(M)) \cong H^i(\Ud_\ell/\Ud_\ell^{\leq 0}, M)$. 
\end{proof}

\begin{remark}
The characteristic $p$ version of this theorem, due to Andersen, gives a short proof of Kempf's vanishing theorem.  Moreover, taking characters of $H^0$ when $\ell = d$ we recover Lemma \ref{combinatorialSt}, and thus it can be seen as the representation-theoretic version of that calculation. 
\end{remark}

\begin{prop}[\cite{APW},\cite{A92}]
\label{steinbergprop}
We have the following properties of the Steinberg module $St_\ell$.
\begin{enumerate}
\item
$St_\ell$ is injective and projective in $\mathcal F$.
\item
If $M$ is a finite dimensional representation, then $St_\ell \otimes M$ is tilting, projective and injective in $\mathcal F$.
\end{enumerate}
\end{prop}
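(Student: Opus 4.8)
The plan is to reconstruct the standard arguments of \cite{APW92} and \cite{A92}, keeping track — as the text already anticipates — of the fact that every ingredient survives for the small values of $\ell$ in play (in particular $\ell = r$), thanks to the decomposition $\mathbf f^*_\ell \otimes \mathfrak f \cong \mathbf f_\ell$ of the preceding proposition and to \cite{Ka}. I would first record the structural facts. Since $St_\ell = \nabla(\sigma_\ell)$ is simple (stated in the text), and since the Weyl module $\Delta(\sigma_\ell)$ and the dual Weyl module $\nabla(\sigma_\ell)$ share the character given by Weyl's formula and each has $L(\sigma_\ell)$ as its only composition factor, we get $St_\ell = \nabla(\sigma_\ell) = \Delta(\sigma_\ell) = L(\sigma_\ell)$; in particular $St_\ell$ is simultaneously a standard and a costandard module, hence tilting, and $St_\ell = T(\sigma_\ell)$ in the notation of Theorem \ref{tiltingthm}. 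Moreover $-w_0$ fixes $\sigma_\ell$ (the $\ell_i$ are constant on $-w_0$-orbits in $I$), so $St_\ell$ is self-dual; consequently, for $St_\ell$ the properties ``injective in $\mathcal F$'' and ``projective in $\mathcal F$'' are equivalent, and likewise $St_\ell \otimes M$ is self-dual up to replacing $M$ by $M^*$.

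The heart of the matter is part $(1)$. I would first show that $St_\ell$ is projective — equivalently, since the infinitesimal quantum group is a Frobenius algebra, injective — over $\hat{\mathfrak u}$. This is exactly the statement that the baby Verma module $\hat Z(\sigma_\ell)$, which the text identifies with $St_\ell$, is its own $\hat{\mathfrak u}$-projective cover: the projective cover $P(\sigma_\ell)$ carries a filtration by modules $\hat Z(\nu)$ with $\hat Z(\sigma_\ell)$ as its top and a dual filtration with the corresponding costandard module at the bottom, and the maximality of $\sigma_\ell$ among the weights $\nu$ that can occur forces $P(\sigma_\ell) = \hat Z(\sigma_\ell) = St_\ell$. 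To lift $\hat{\mathfrak u}$-projectivity to projectivity in $\mathcal F$, I would use that the quotient of $\Ud_\ell$ by the augmentation ideal of the infinitesimal quantum group is $\Ud^*_\ell$ (the content of the quantum Frobenius, valid at small $\ell$ by the preceding proposition and \cite{Ka}), together with the associated Lyndon--Hochschild--Serre spectral sequence $\on{Ext}^p_{\Ud^*_\ell}\big(\C, \on{Ext}^q_{\hat{\mathfrak u}}(St_\ell, N)\big) \Rightarrow \on{Ext}^{p+q}_{\Ud_\ell}(St_\ell, N)$ for $N \in \mathcal F$. The rows $q \geq 1$ vanish because $St_\ell$ is $\hat{\mathfrak u}$-projective, and the columns $p \geq 1$ vanish because $\Ud^*_\ell$ is quasiclassical (Proposition \ref{quasiclassical}), so after base change to $\C$ its category of finite-dimensional modules is semisimple; hence $\on{Ext}^{\geq 1}_{\Ud_\ell}(St_\ell, N) = 0$, i.e. $St_\ell$ is projective, and therefore also injective, in $\mathcal F$.

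For part $(2)$, the projectivity and injectivity of $St_\ell \otimes M$ is immediate: $\hat{\mathfrak u}$ is a Hopf algebra, so $St_\ell \otimes M$ is again $\hat{\mathfrak u}$-projective, and the same spectral sequence argument applies; alternatively $\on{Hom}_{\mathcal F}(-, St_\ell \otimes M) \cong \on{Hom}_{\mathcal F}(- \otimes M^*, St_\ell)$ is exact. That $St_\ell \otimes M$ is tilting I would then deduce from the $\on{Ext}^1$-criterion for (co)standard filtrations: $\on{Ext}^1_{\Ud_\ell}(\Delta(\mu), St_\ell \otimes M) \cong \on{Ext}^1_{\Ud_\ell}(\Delta(\mu) \otimes M^*, St_\ell) = 0$ by injectivity of $St_\ell$, giving a costandard filtration, and dually $\on{Ext}^1_{\Ud_\ell}(St_\ell \otimes M, \nabla(\mu)) \cong \on{Ext}^1_{\Ud_\ell}(St_\ell, \nabla(\mu) \otimes M^*) = 0$ by projectivity of $St_\ell$, giving a standard filtration. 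One could instead bootstrap from $(1)$ of Theorem \ref{tiltingthm} via the isomorphism $\nabla(\lambda + \sigma_\ell) \cong St_\ell \otimes \nabla^*(\lambda)^{Fr}$ (the $i = 0$ case of (\ref{SteinbergRel})), but the direct argument avoids restricting $M$ to Frobenius pullbacks.

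The step I expect to be the main obstacle is the one hidden inside part $(1)$: checking that the Frobenius-kernel formalism — surjectivity of $Fr$, freeness of $\Ud_\ell$ over the infinitesimal quantum group, the identification of the quotient with $\Ud^*_\ell$, and the convergence of the spectral sequence — together with the $\hat{\mathfrak u}$-projectivity of $St_\ell$, all remain valid at the small $\ell$ relevant here. This is precisely what is supplied by the decomposition $\mathbf f^*_\ell \otimes \mathfrak f \cong \mathbf f_\ell$ of the preceding proposition and by \cite{Ka}, so once those are in hand the proof is an assembly of the classical arguments of \cite{APW92} and \cite{A92}.
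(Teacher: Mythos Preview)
Your line of argument is sound in outline but takes a different route from the paper's. For $(1)$ the paper does not use infinitesimal projectivity or a Hochschild--Serre spectral sequence at all; instead it shows $\text{Ext}^1(St_\ell, L(\lambda)) = 0$ directly. The linkage principle forces $\lambda = \sigma_\ell + \mu$ with $\mu \in X^*$, and then the immediately preceding theorem (equation~(\ref{SteinbergRel}) at $i=0$) together with Steinberg's tensor product theorem gives $L(\sigma_\ell + \mu) = \nabla(\sigma_\ell + \mu)$, so the question reduces to $\text{Ext}^1(\Delta(\sigma_\ell), \nabla(\sigma_\ell + \mu)) = 0$, which is the basic tilting-theory vanishing. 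This stays entirely within the highest-weight formalism already set up in the section and sidesteps the small-quantum-group machinery. Your approach is closer to the original \cite{APW92} argument, but be careful with the algebras: the spectral sequence you want must run over the Frobenius kernel $\dot{\mathfrak u}$ (or its unmodified finite-dimensional incarnation), not $\hat{\mathfrak u}$. The latter contains the full $\mathbf U^-$ and is not the kernel of $Fr$; the ``quotient'' $\Ud_\ell/\hat{\mathfrak u}$ corresponds to $\Ud^*_\ell/\Ud^{*\leq 0}_\ell$ (cf.\ equation~(\ref{isomorphism})), not to $\Ud^*_\ell$. Your maximality argument for $\hat Z(\sigma_\ell)$ being its own projective cover is really a statement about $\dot{\mathfrak u}$-modules, and once the spectral sequence is written with $\dot{\mathfrak u}$ the degeneration goes through as you describe.

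For $(2)$ your treatment of projectivity and injectivity via tensor--Hom adjunction matches the paper's. For the tilting claim the paper argues instead that every module embeds in some $St_\ell \otimes T$ with $T$ tilting, whence indecomposable injectives, and therefore all summands of $St_\ell \otimes M$, are tilting; your direct appeal to the $\text{Ext}^1$-criterion for standard and costandard filtrations is an equally standard alternative and arguably more direct.
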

\begin{proof}
We outline a proof of this theorem here to emphasize that it holds for all $\ell$ (at least over $\mathbb C$, which is the only field we need here). We must show that $\text{Ext}^1(St_\ell, L(\lambda)) = 0$ for all $\lambda \in X^+$. The linkage principle already implies that this Ext vanishes unless $\lambda = \sigma_\ell + \mu$ where $\mu \in X^*$. Now the previous theorem shows that these modules are in fact standard modules $\nabla(\sigma_\ell + \mu) = St_\ell\otimes Fr^*(\nabla^*(\mu))$. Hence it is enough to show that $\text{Ext}^1(St_\ell, \nabla(\sigma_\ell + \mu)) = 0$. 

Since $St_\ell$ is self-dual, this follows if we can show $\text{Ext}^1(\Delta(\sigma_\ell),\nabla(\sigma_\ell+\mu)) = 0$, but it is known (and crucial in the construction of tilting modules) that
\[
\text{Ext}^1(\Delta(\lambda), \nabla(\mu))=0, \quad  \forall \lambda, \mu \in X^+,
\]
and so we are done.

Self-duality also immediately implies that $St_\ell$ is injective. Moreover, using standard properties of $\text{Hom}$ and tensor product, it follows readily that $St_\ell \otimes E$ is injective and projective for any finite-dimensional module $E$. To see that it is tilting, one can show that any module can be imbedded in a module of the form $St_\ell \otimes T$ where $T$ is tilting. Since $St_\ell$ is also tilting, and tilting modules are closed under direct summands, it follows that indecomposable projectives and injective modules are tilting. See \cite{A92} for more details.
\end{proof}

\begin{cor}
Let $\mu \in X^*\cap X^+$. Then $\nabla(\mu+\rho^L-\rho)$ is simple, tilting, projective and injective.
\end{cor}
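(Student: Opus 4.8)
The plan is to read the Corollary off the Steinberg-type isomorphism (\ref{SteinbergRel}) together with Proposition \ref{steinbergprop}. First I would record that in the situation at hand ($\ell=d$, so that $\ell_i=l_i$ for every $i$) the Steinberg weight $\sigma_\ell=\sum_{i\in I}(\ell_i-1)\varpi_i$ is exactly $\rho^L-\rho=\sum_{i\in I}(l_i-1)\varpi_i$. Hence
\[
\nabla(\mu+\rho^L-\rho)=\nabla(\mu+\sigma_\ell)=H^0(\Ud_\ell/\Ud_\ell^{\leq 0},\mu+\sigma_\ell),
\]
and taking $\lambda=\mu$ and $i=0$ in (\ref{SteinbergRel}) gives
\[
\nabla(\mu+\rho^L-\rho)\cong St_\ell\otimes H^0(\Ud^*_\ell/U^{*\leq 0}_\ell,\mu)^{Fr}=St_\ell\otimes\nabla^*(\mu)^{Fr}.
\]
Since the category of $\Ud^*_\ell$-modules is semisimple, $\nabla^*(\mu)$ is a finite-dimensional simple module, so $\nabla^*(\mu)^{Fr}$ is an honest finite-dimensional object of $\mathcal F$.

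Now Proposition \ref{steinbergprop}(2) applies with $E=\nabla^*(\mu)^{Fr}$: the module $St_\ell\otimes\nabla^*(\mu)^{Fr}$ is tilting, projective and injective in $\mathcal F$. By the displayed isomorphism this gives all three properties for $\nabla(\mu+\rho^L-\rho)$, and only simplicity remains.

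For simplicity I would argue as follows. The module $\nabla(\mu+\rho^L-\rho)$ is costandard and, by the previous step, tilting, so it admits a standard (Weyl) filtration. The lowest step of such a filtration is a submodule whose socle lies in $\mathrm{soc}\,\nabla(\mu+\rho^L-\rho)=L(\mu+\rho^L-\rho)$; comparing highest weights identifies this step with $\Delta(\mu+\rho^L-\rho)$, and then comparing characters (both given by Weyl's formula) forces the filtration to have a single step. Thus $\nabla(\mu+\rho^L-\rho)$ is simultaneously standard and costandard, so it has simple head and simple socle, both equal to $L(\mu+\rho^L-\rho)$; since its highest weight occurs with weight-multiplicity one, $L(\mu+\rho^L-\rho)$ occurs with composition multiplicity one, which is incompatible with the module having length $\geq 2$ (the socle would be a proper submodule inside the radical). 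Hence it is simple. Equivalently, one may invoke the Steinberg tensor product theorem for quantum groups at a root of unity, which identifies $St_\ell$ tensored with the Frobenius twist of the simple $\Ud^*_\ell$-module of highest weight $\mu$ with $L(\mu+\sigma_\ell)$, and use that this simple $\Ud^*_\ell$-module is $\nabla^*(\mu)$ by semisimplicity. The only real content beyond substitution into (\ref{SteinbergRel}) and Proposition \ref{steinbergprop} is precisely this last step --- passing from ``$St_\ell\otimes(\text{simple})$'', equivalently from ``tilting and costandard'', to genuine irreducibility --- and that is where I expect a reader would want the argument spelled out most carefully.
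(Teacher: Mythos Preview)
Your argument is correct and follows the same route as the paper: identify $\sigma_\ell=\rho^L-\rho$, apply (\ref{SteinbergRel}) to write $\nabla(\mu+\rho^L-\rho)\cong St_\ell\otimes\nabla^*(\mu)^{Fr}$, and then invoke Proposition~\ref{steinbergprop}. The only difference is in the justification of simplicity: the paper simply cites Lusztig's quantum Steinberg tensor product theorem (which you list as your alternative), whereas your primary argument deduces $\nabla=\Delta$ from the character identity and then argues directly that a module which is simultaneously standard and costandard with highest-weight multiplicity one must be irreducible. Both are fine; your direct argument has the advantage of not appealing to an outside result, while the paper's citation is shorter.
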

\begin{proof}
From Theorem \ref{SteinbergRel} and Lusztig's quantum version of Steinberg's tensor product theorem, it follows that the modules $\nabla(\mu+\rho^L-\rho) = \nabla(\rho^L-\rho)\otimes Fr^*(\nabla^*(\mu))$ are simple. By the previous Proposition, they are also tilting and injective. By duality, they are also projective.\end{proof}

We now examine the Langlands branching multiplicities. We would like a representation-theoretic interpretation of the calculation of these multiplicities in terms of tensor-product multiplicities in Theorem \ref{branchingrule}. The key, unsurprisingly, is Theorem \ref{SteinbergRel} and the theory of tilting modules outlined above. Notice first that $c^*(\nabla(\lambda))$ is a representation of $\Ud^*_\ell$, so it does not make sense to compare it to $\nabla(\lambda)$, however we may pull it back via $Fr$ without changing its character. Unfortunately, $Fr^*c^*(\nabla)$ still has no obvious (at least to the authors) relation to $\nabla(\lambda)$. Nevertheless once we tensor with the Steinberg representation, a natural relation appears. Recall from \cite{A03} that the linkage principle for $\Ud_\ell$ shows that the orbits of the $\rho$-shifted action of the affine Weyl group $\hat{W}$ are unions of blocks for $\Ud_\ell$. The simple modules $\nabla(\mu+ \rho^L-\rho)$ for $\mu \in X^*$  thus all lie in the union of blocks given by the orbits of $\hat{W}$ on $\rho^L-\rho + X^*$.

\begin{prop}
\label{occurence}
Let $\lambda \in X^*$. The module $St_\ell \otimes Fr^*c^*(\nabla(\lambda))$ is a direct summand of the module $St_\ell \otimes \nabla(\lambda)$, and moreover is precisely the summand which lies in the union of the blocks of $\Ud_\ell$ contained in the $\hat{W}$-orbits of $\rho^L-\rho + X^*$.
\end{prop}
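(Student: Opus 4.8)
The plan is to realize both modules appearing in the statement as tilting modules and then invoke Theorem~\ref{tiltingthm}(2). Since $\nabla(\lambda)$ is finite dimensional, Proposition~\ref{steinbergprop}(2) shows that $St_\ell\otimes\nabla(\lambda)$ is tilting (as well as projective and injective); in particular it is the direct sum of its block components, and each of these is again tilting because the $\nabla$- and $\Delta$-filtrations can be regrouped block by block. For the other side, the category of $\Ud^*_\ell$-modules is semisimple and $\chi^L(c^*(\nabla(\lambda)))=\Pi_\ell(\chi(\lambda))=\sum_{\mu\in X^+\cap X^*}m_\mu^\lambda\,\chi^L(\mu)$, so $c^*(\nabla(\lambda))\cong\bigoplus_\mu\nabla^*(\mu)^{\oplus m_\mu^\lambda}$ as a $\Ud^*_\ell$-module; pulling back along $Fr$ and tensoring with $St_\ell$, equation~(\ref{SteinbergRel}) (equivalently the Corollary above) gives
\[ St_\ell\otimes Fr^*c^*(\nabla(\lambda))\;\cong\;\bigoplus_{\mu\in X^+\cap X^*}\big(St_\ell\otimes Fr^*(\nabla^*(\mu))\big)^{\oplus m_\mu^\lambda}\;\cong\;\bigoplus_{\mu\in X^+\cap X^*}\nabla(\mu+\rho^L-\rho)^{\oplus m_\mu^\lambda}, \]
a direct sum of simple modules which, again by the Corollary, are tilting; hence $St_\ell\otimes Fr^*c^*(\nabla(\lambda))$ is tilting.

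Next I would identify the relevant blocks. A short computation with the coroot pairings shows that every dominant weight of $\rho^L-\rho+X^*$ has the form $\mu+\rho^L-\rho$ for a (unique) $\mu\in X^+\cap X^*$: if $\eta\in X^+$ and $\eta-(\rho^L-\rho)\in X^*$, then $\langle\check{\alpha}_i,\eta-(\rho^L-\rho)\rangle=\langle\check{\alpha}_i,\eta\rangle-(l_i-1)$ is a multiple of $l_i$ that is $>-l_i$, hence $\geq 0$. By the Corollary, for each such $\mu$ the module $\nabla(\mu+\rho^L-\rho)$ is simple, projective and injective, so $L(\mu+\rho^L-\rho)$ is its own projective cover and is therefore the only simple object of its block. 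Combining this with the linkage principle of \cite{A03} (each block lies in a single $\hat{W}$-orbit of the $\rho$-shifted action, and $\rho^L-\rho+X^*$ is stable under this action, the finite Weyl group preserving it since $w(\rho^L)-\rho^L\in X^*$ and the translation lattice of $\hat{W}$ lying in $X^*$), the blocks of $\Ud_\ell$ lying in the $\hat{W}$-orbits of $\rho^L-\rho+X^*$ are exactly the singleton blocks $\{L(\mu+\rho^L-\rho)\}$ with $\mu\in X^+\cap X^*$, and every finite-dimensional module supported in such a block is semisimple.

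It then remains to match multiplicities. Writing $St_\ell\otimes\nabla(\lambda)=\bigoplus_B M_B$ for the block decomposition, let $N$ be the sum of the $M_B$ over blocks $B$ lying in the $\hat{W}$-orbits of $\rho^L-\rho+X^*$; this is a direct summand of $St_\ell\otimes\nabla(\lambda)$, and by the previous paragraph $N\cong\bigoplus_\mu\nabla(\mu+\rho^L-\rho)^{\oplus d_\mu}$ where $d_\mu$ is the multiplicity of the simple module $\nabla(\mu+\rho^L-\rho)$ as a composition factor of $St_\ell\otimes\nabla(\lambda)$, which for this tilting module equals the multiplicity of $\nabla(\mu+\rho^L-\rho)$ in any $\nabla$-filtration. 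Since $St_\ell=\nabla(\sigma_\ell)$ with $\sigma_\ell=\rho^L-\rho$, we have $\chi(St_\ell\otimes\nabla(\lambda))=\chi(\rho^L-\rho)\chi(\lambda)$, and the identities~(\ref{Pipart}) and~(\ref{notPipart}), together with the linear independence of $\{\chi(\xi):\xi\in X^+\}$ in $\mathcal E$, show that the coefficient of $\chi(\mu+\rho^L-\rho)$ in this character is precisely $m_\mu^\lambda$. Hence $d_\mu=m_\mu^\lambda$ and $\chi(N)=\chi(St_\ell\otimes Fr^*c^*(\nabla(\lambda)))$. As both modules are tilting, Theorem~\ref{tiltingthm}(2) yields $N\cong St_\ell\otimes Fr^*c^*(\nabla(\lambda))$, which is the assertion of the Proposition.

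As for where the difficulty lies: all of the representation-theoretic machinery (tilting modules, the Steinberg module, equation~(\ref{SteinbergRel}), the character identities of Section~\ref{combinatorialbranching}) is already available, so the work is essentially bookkeeping — one must check that the character splitting produced by~(\ref{Pipart})/(\ref{notPipart}) is exactly the splitting of the tilting module $St_\ell\otimes\nabla(\lambda)$ into block components. The key enabling observation is that every dominant weight in the coset $\rho^L-\rho+X^*$ yields a simple (hence projective and injective) standard module; this is what forces those blocks to be singletons and lets one read off $N$ directly as $\bigoplus_\mu\nabla(\mu+\rho^L-\rho)^{\oplus m_\mu^\lambda}$ rather than having to untangle larger block components.
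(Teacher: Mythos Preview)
Your proof is correct and follows essentially the same strategy as the paper's: both arguments use Proposition~\ref{steinbergprop}(2) to see that $St_\ell\otimes\nabla(\lambda)$ is tilting, use equation~(\ref{SteinbergRel}) to rewrite $St_\ell\otimes Fr^*c^*(\nabla(\lambda))$ as $\bigoplus_\mu\nabla(\mu+\rho^L-\rho)^{\oplus m_\mu^\lambda}$, exploit the fact that each $\nabla(\mu+\rho^L-\rho)$ is simple, projective and injective, and finish with the character identities of Section~\ref{combinatorialbranching}. The only organizational difference is that the paper decomposes $St_\ell\otimes\nabla(\lambda)$ into indecomposable tiltings $T(\nu+\rho^L-\rho)$ and argues that the projective--injective $\nabla(\mu+\rho^L-\rho)$ cannot appear in a $\nabla$-filtration of any $T(\nu+\rho^L-\rho)$ with $\nu\notin X^*$, then invokes Theorem~\ref{branchingrule}; you instead phrase this via the block decomposition, showing directly that the relevant blocks are singletons, and read off the multiplicities from~(\ref{Pipart}) and~(\ref{notPipart}) rather than citing Theorem~\ref{branchingrule}.
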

\begin{proof}
By part $(2)$ of Proposition \ref{steinbergprop} we see that $St_\ell \otimes \nabla(\lambda)$ is a tilting module. Thus if $T(\gamma)$ denotes the indecomposable tilting module with highest weight $\gamma$, we may write 
\[
St_\ell \otimes \nabla(\lambda) = \bigoplus_{\nu \in X^+} T(\nu+\rho^L-\rho),
\]
a sum of indecomposable tilting modules (the tilting modules which occur must have highest weight of the form $\nu+\rho^L-\rho$, by \cite[5.12]{A92}). For any $\mu \in X^*$, Theorem \ref{SteinbergRel} combined with Proposition \ref{steinbergprop} shows that $\nabla(\mu+\rho^L-\rho)$ is projective and injective and tilting, thus it cannot occur as a composition factor of a standard filtration of $T(\nu+\rho^L-\rho)$ for $\nu \notin X^*$. Therefore
\[
St_\ell\otimes \nabla(\lambda) = T \oplus \big(\bigoplus_{\mu \in X^*}\nabla(\mu+\rho^L-\rho)^{\oplus c_{\rho^L-\rho, \mu}^{\mu+\rho^L-\rho}}\big),
\]
where $T$ is a tilting module whose character lies entirely in the (positive) span of the Weyl characters $\chi(\nu+\rho^L-\rho)$ for $\nu \notin X^*$.

On the other hand,  we have 
\[
St_\ell \otimes Fr^*c^*(\nabla(\lambda)) = \bigoplus_{\mu \in X^*} \nabla(\mu+ \rho^L-\rho)^{\oplus m_\mu^\lambda},
\]
Hence using Theorem \ref{branchingrule} the result follows.
\end{proof}

\begin{remark}
This also shows that the expression $\chi(\rho^L-\rho).(\chi(\lambda) -\Pi(\chi(\lambda))$ is the character of $T$ in the above proof, which is also a direct summand of $St_\ell\otimes \nabla(\lambda)$. Note also that the above proof shows that $m_\mu^\lambda \leq c_{\mu,\rho^L-\rho}^{\mu+\rho^L-\rho}$, independently of Section \ref{combinatorialbranching} since tilting modules are determined by their character. It would be interesting to know if there is a natural $\Ud$-module map between $St_\ell \otimes \nabla(\lambda)$ and $St_\ell \otimes Fr^*c^*(\nabla(\lambda))$.
\end{remark}

\end{document}